\newtheorem{thm}{Theorem}[section]
\newtheorem{lem}[thm]{Lemma}
\newtheorem{prop}[thm]{Proposition}
\theoremstyle{remark}
\newtheorem{rem}[thm]{Remark}
\theoremstyle{definition}
\newtheorem{defn}[thm]{Definition}
\newtheorem{exmp}[thm]{Example}
\def\G{\Gamma}
\def\k{\mathbf{k}}
\def\m{\mathbf{m}}
\def\z{\mathbf{z}}
\def\K{\mathbf{K}}
\def\Z{\mathbb{Z}}
\def\N{\mathbb{N}}
\DeclareMathOperator{\aut}{Aut}
\DeclareMathOperator{\cay}{Cay}
\DeclareMathOperator{\parti}{Part}
\newcommand{\sg}[1]{\langle {#1}\rangle}
\begin{document}
\title[Cyclic $m$-DCI-groups and $m$-CI-groups]
{Cyclic $\mathbf{m}$-DCI-groups and $\mathbf{m}$-CI-groups}
\author[I.~Kov\'acs \and L. \v Sinkovec]
{Istv\'an Kov\'acs$^{1}$ \and Luka \v Sinkovec$^{2}$}
\address{I.~Kov\'acs \and L.~\v Sinkovec 
\newline\indent
UP IAM, University of Primorska, Muzejski trg 2, SI-6000 Koper, Slovenia 
\newline\indent
UP FAMNIT, University of Primorska, Glagol\v jaska ulica 8, SI-6000 Koper, Slovenia}
\email{istvan.kovacs@upr.si} 
\email{luka.sinkovec@iam.upr.si}
\thanks{$^1$~This work is supported in part by the Slovenian Research Agency
(research program P1-0285 and research projects N1-0140, J1-2451,
N1-0208, J1-3001, and J1-50000). 
\newline\indent 
$^2$~This work is supported in part by the Slovenian Research Agency (research program P1-0285 and Young Researchers Grant).}
\keywords{Cayley graph, cyclic group, m-CI-group, m-DCI-group}
\subjclass[2020]{05C25, 20B25}

\begin{abstract}
Based on the earlier work of Li (European J. Combin. 1997) and Dobson (Discrete Math. 2008),  in this paper we complete the classification of cyclic $m$-DCI-groups and $m$-CI-groups. For a positive integer $m$ such that $m  \ge 3$, we show that 
the group $\mathbb{Z}_n$ is an $m$-DCI-group if and only if $n$ is not divisible by 
$8$ nor by $p^2$ for any odd prime $p < m$. Furthermore, if $m \ge 6$, then we show that $\mathbb{Z}_n$ is an $m$-CI-group if and only if either $n \in \{ 8, 9, 18 \}$, or 
$n \notin \{ 8, 9, 18 \}$ and $n$ is not divisible by $8$ nor by $p^2$ for any odd prime $p < \frac{m - 1}{2}$.
\end{abstract}
\maketitle
 
\section{Introduction}\label{sec_intro}
Let $G$ be a finite group and $S \subset G$ be a subset not containing 
the identity element of $G$. The {\em Cayley digraph} $\cay(G,S)$ is defined to have 
vertex set $G$ and arcs in the form $(g,sg)$, where $g \in G$ and $s\in S$. 
In the case when $S$ is also inverse-closed, i.e., for every 
$x \in S$, $x^{-1} \in S$, the pair of arcs $(g,sg)$ and $(sg,g)$ can be identified 
with the undirected edge $\{g,sg\}$, in which case $\cay(G,S)$ is regarded as an undirected graph and referred to as a {\em Cayley graph}.  
Every automorphism $\sigma \in \aut(G)$ 
induces an isomorphism between the (di)graphs $\cay(G,S)$ and $\cay(G,S^\alpha)$, in which case $\sigma$ is called a {\em Cayley isomorphism}. 
The converse is not true in general, examples  
of Cayley (di)graphs are known which are isomorphic but there exists no Cayley isomorphism mapping one onto the other.  
A (di)graph $\cay(G,S)$ is called a {\em CI-(di)graph} if for any 
$\cay(G,T)$, whenever $\cay(G,S) \cong \cay(G,T)$, we have 
$S^\sigma=T$ for some $\sigma \in \aut(G)$. 
For a positive integer $m$, the group $G$ has the 
{\em $m$-DCI-property} ({\em $m$-CI-property}, resp.) if all Cayley digraphs 
(Cayley graphs, resp.) on $G$ of valency $m$ are CI-digraphs (CI-graphs, resp.) 
(see \cite[Definition~1.1]{LPX}). By the valency of a Cayley digraph 
$\cay(G,S)$ we mean its in-valency, which is   
equal to the cardinality $|S|$ and coincides with its out-valency. 
Furthermore, $G$ is called an {\em $m$-(D)CI-group} if it has the $i$-(D)CI property for every 
$i \le m$ and a {\em D(CI)-group} when it has the $i$-(D)CI property for every 
positive integer $i$.

The study of finite groups with the $m$-(D)CI property was initiated by Li et al.~\cite{LPX}. 
Both properties attracted considerable attention in the late 90's, 
in particular, it was shown that the finite $m$-DCI-groups belong to 
a relatively restricted list of groups, and a similar list was also derived containing the finite $m$-CI-groups.
For the results from this period, we refer to the survey paper~\cite{L02}.

In this paper, we aim to complete the classification of cyclic $m$-DCI- and $m$-CI-groups. 
We review next the known results.  
In 1967, \'Ad\'am~\cite{Ad} made a conjecture stating that every cyclic group is a 
DCI-group. This conjecture was shown to be false soon after it has been published and 
remarkably, the question as to which cyclic groups are DCI-groups was answered only 30 years later by Muzychuk~\cite{M97}.  

\begin{thm}[\cite{M97}]\label{muzychuk1} \, 
\begin{enumerate}[(1)]
\item The group $\Z_n$ is a DCI-group if and only if $n = k$ or $2k$, where $k$ is square-free.
\item The group $\Z_n$ is a CI-group if and only if either $n \in \{8, 9, 18\}$, or 
$n = k$ or $2k$, where $k$ is square-free.
\end{enumerate}
\end{thm}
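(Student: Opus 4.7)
The plan is to treat the ``only if'' and ``if'' directions of (1) and (2) separately, following the Schur-ring framework.

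For the necessity in (1), I would exhibit, for every $n$ divisible by $8$ or by $p^2$ with $p$ an odd prime, two isomorphic Cayley digraphs $\cay(\Z_n,S)$ and $\cay(\Z_n,T)$ with $S^\sigma\neq T$ for every $\sigma\in\aut(\Z_n)$. The prototype is the classical Elspas--Turner construction on $\Z_{p^2}$: take $S$ as the union of a coset of $p\Z_{p^2}$ with a few auxiliary elements and define $T$ by permuting the coset via a digraph isomorphism that is not a multiplier of $\Z_{p^2}$. The example lifts to general $n$ by pulling $S$ back along the quotient $\Z_n\twoheadrightarrow\Z_{p^2}$; the case $8\mid n$ is handled analogously with the well-known counterexample on $\Z_8$.

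For the sufficiency in (1), assume $n=k$ or $2k$ with $k$ square-free, and let $\cay(\Z_n,S)\cong\cay(\Z_n,T)$. I would invoke Babai's criterion, which says that $\cay(\Z_n,S)$ is a CI-digraph if and only if every regular cyclic subgroup of $\aut\bigl(\cay(\Z_n,S)\bigr)$ isomorphic to $\Z_n$ is conjugate inside this automorphism group to the standard regular copy of $\Z_n$. To verify the criterion, I would associate to each candidate regular subgroup its orbital Schur ring on $\Z_n$ and apply a structure theorem for S-rings over cyclic groups of the prescribed form: by the Chinese Remainder decomposition, every such S-ring is a tensor product of cyclotomic S-rings over the prime-order factors (with at most one factor $\Z_2$). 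Since combinatorial isomorphisms between cyclotomic S-rings over a prime-order group $\Z_p$ are induced by elements of $\aut(\Z_p)$, these local multipliers combine via CRT into an element of $\aut(\Z_n)$ carrying $S$ to $T$.

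For (2), the necessity follows from (1) applied to inverse-closed $S$, except that the three cases $n\in\{8,9,18\}$ must be ruled out separately; the corresponding ``if'' direction beyond the square-free range then reduces to showing that each of $\Z_8$, $\Z_9$, $\Z_{18}$ is a CI-group, which is a finite check done by enumerating all inverse-closed connection sets up to the action of $\aut(\Z_n)$ and confirming that distinct $\aut(\Z_n)$-orbits yield non-isomorphic Cayley graphs.

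The main obstacle is the sufficiency in (1): everything hinges on the Schur-ring decomposition theorem for cyclic groups of the form $k$ or $2k$ with $k$ square-free, namely that no exotic S-rings exist at any prime factor and that tensor-product decomposition holds across distinct primes. Establishing this structural result --- and then translating combinatorial S-ring isomorphisms into genuine group multipliers via CRT --- is the technical heart of the argument; the counterexamples for the necessity direction and the finite check for $\{8,9,18\}$ are routine by comparison.
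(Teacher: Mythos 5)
This statement is quoted from Muzychuk~\cite{M97}; the paper gives no proof of it, so there is nothing internal to compare your proposal against. Judged on its own terms, your outline identifies the right landmarks (counterexamples on $\Z_{p^2}$ and $\Z_8$ for necessity, Babai's criterion and S-ring structure theory for sufficiency), but it contains two genuine gaps. First, the structural claim at the heart of your sufficiency argument --- that every S-ring over $\Z_n$ with $n=k$ or $2k$, $k$ square-free, is a tensor product of cyclotomic S-rings over the prime-order factors --- is false. S-rings over cyclic groups of square-free order can also arise as (generalized) wreath products; for instance the orbital S-ring of a lexicographic product $\cay(\Z_p,A)[\cay(\Z_q,B)]$ is not a tensor product of its prime components. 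Handling the wreath-decomposable case, where a combinatorial isomorphism need not respect the CRT coordinates and one must recursively solve the problem on quotient and block digraphs, is precisely the technical core of Muzychuk's proof (and the reason his 1997 paper required a corrigendum); your sketch collapses it into a step that does not occur.

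Second, the necessity direction of (2) does not ``follow from (1) applied to inverse-closed $S$.'' The directed counterexamples are not inverse-closed (e.g.\ $\{1,2,5\}\subset\Z_8$), and $\Z_8$, $\Z_9$, $\Z_{18}$ are in fact CI-groups --- which is exactly why they are exceptions in (2). One needs separate \emph{undirected} non-CI constructions of larger valency (the paper later uses Alspach's valency-$6$ example for $8\mid n$, a valency-$8$ example for $9\mid n$ with $n\notin\{9,18\}$, and a valency-$(2p+2)$ example on $\Z_{p^2}$ for $p\ge 5$), and these must then be transplanted into $\Z_n$. On that last point, your proposed transplantation by pulling back along the quotient $\Z_n\twoheadrightarrow\Z_{p^2}$ also needs justification, since a lexicographic blow-up of a non-CI digraph is not automatically non-CI; the clean route, used in this paper for its own main theorem, is to realize the connection set inside a subgroup isomorphic to $\Z_{p^2}$ and invoke the reduction of Lemma~\ref{reduction}.
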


A few papers were devoted to cyclic $m$-(D)CI-groups for small $m$ (see~\cite{BT,DFM,FX,L95,S84,S88,T}), in particular,  
Sun~\cite{S84} showed that every cyclic group is a $2$-DCI-group and 
Li~\cite{L95} showed that every cyclic group is a $5$-CI-group.  
At this point we would like to emphasize the difference between 
the $m$-DCI- and the $m$-CI-property for cyclic groups, which lies in 
the fact that by the former property one concerns digraphs, whereas by 
the latter property one concerns only undirected graphs. 
For example, it is known that $\cay(\Z_8,\{1,2,5\})$ is not a CI-digraph~\cite{ET}, showing that $\Z_8$ does not have the $3$-DCI-property. 
On the other hand, as $\Z_n$ is a $5$-CI-group for every $n$, $\Z_8$ has 
the $3$-CI-property (this can be easily checked by simply looking at the possible Cayley 
graphs on $\Z_8$ of valency $3$).

In \cite{L97}, Li determined the positive integers $m$ for which $\Z_{p^2}$ has the $m$-DCI-property. 
The following result was also shown, we present it here as it is stated in~\cite[Theorem~7.2]{L02}.

\begin{thm}[\cite{L02}]\label{L97}
Let $G$ be a cyclic group, and let $G_p$ be the Sylow $p$-subgroup of $G$. Suppose that $G$ has the $m$-DCI-property. If $|G|$ is not a prime-square and $p+1 \le m \le (|G|-1)/2$, then $G_p \cong \Z_p$ or 
$\Z_4$.
\end{thm}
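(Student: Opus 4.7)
I would argue the contrapositive. Assume $G=\Z_n$ with $n$ not a prime-square, and that the Sylow $p$-subgroup $G_p$ is neither $\Z_p$ nor $\Z_4$, so $|G_p|\geq p^2$ for odd $p$ and $|G_p|\geq 8$ for $p=2$. The goal is, for each integer $m$ with $p+1\leq m\leq(n-1)/2$, to exhibit a Cayley digraph on $G$ of valency $m$ that is not a CI-digraph, contradicting the assumed $m$-DCI-property.

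The construction is a seed-and-padding argument. The \emph{seed} is a pair of connection sets $(S_0,T_0)$ of equal size $m_0$ inside a subgroup $K\leq G$ isomorphic to $\Z_{p^2}$ (for odd $p$) or to $\Z_8$ (for $p=2$), with $\cay(K,S_0)\cong\cay(K,T_0)$ but with no automorphism of $K$ mapping $S_0$ onto $T_0$. Such seeds are supplied by Li's analysis of $\Z_{p^2}$ in~\cite{L97}, which in particular provides one at the smallest valency $m_0=p+1$, and for $p=2$ by the Elspas--Turner example $\cay(\Z_8,\{1,2,5\})$ of valency $3=p+1$ already cited in the introduction. To pass from $K$ to $G$ I would use the coprime primary decomposition $G=G_p\times H$ (with $H$ the Hall $p'$-subgroup): in the simplest case $|G_p|=p^2$ we have $K=G_p$ and $G=K\times H$, and one may take a \emph{padding} $S'\subset\{0\}\times H$ of size $m-m_0$ and set $S=S_0\cup S'$ and $T=T_0\cup S'$; the case $|G_p|>p^2$ requires a more careful choice, essentially padding also inside $G_p\setminus K$ using $\aut(G_p)$-orbits.

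Two verifications are then needed. The map $\Phi\colon(k,h)\mapsto(\phi(k),h)$, with $\phi$ the permutation of $K$ realising the seed isomorphism, is a bijection of $G$ exhibiting $\cay(G,S)\cong\cay(G,T)$ (the condition on $\phi$ gives the arcs with second coordinate $0$, and the arcs with second coordinate in $S'$ are preserved because $\phi$ fixes $0$). Conversely, if some $\sigma\in\aut(G)$ sent $S$ to $T$, then since $K$ is the unique subgroup of $G$ of its order and therefore characteristic, $\sigma(K)=K$, so $\sigma|_K\in\aut(K)$ would map $S_0=S\cap K$ onto $T_0=T\cap K$, contradicting the choice of seed.

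The main obstacle is realising every valency $m$ in the full interval $[p+1,(n-1)/2]$. A single seed yields only $m=m_0$, and the simple padding $S'\subset\{0\}\times H$ saturates at $m_0+|H|-1$, which can fall well short of $(n-1)/2$. To cover the range one must combine Li's seeds at several valencies $m_0$ with more elaborate paddings, via a case analysis driven by the prime-power factorisation of $n$; this combinatorial bookkeeping is the technical crux. The lower endpoint $p+1$ is precisely the smallest valency at which a $\Z_{p^2}$-seed is available, while the upper endpoint $(n-1)/2$ is forced by the complementation symmetry that $\cay(G,S)$ and $\cay(G,G\setminus(S\cup\{0\}))$ share the CI-property, so larger valencies are redundant.
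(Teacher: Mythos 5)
First, a point of reference: the paper does not prove this statement at all. It is quoted verbatim from Li's survey (\cite[Theorem~7.2]{L02}, originating in \cite{L97}) and used as an imported necessary condition, so there is no in-paper proof to compare against; your proposal has to be judged against what a complete proof would require.

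Your overall strategy --- prove the contrapositive by exhibiting, for each $m$ in $[p+1,(n-1)/2]$, a non-CI Cayley digraph of valency $m$ --- is the right one, and the two verifications you do carry out are sound: the map $\Phi(k,h)=(\phi(k),h)$ does transport the seed isomorphism when the padding lies in $\{0\}\times H$, and since $K$ is the unique subgroup of its order in a cyclic group it is characteristic, so any Cayley isomorphism would restrict to an automorphism of $K$ sending $S_0=S\cap K$ to $T_0=T\cap K$. The genuine gap is that the entire content of the theorem sits in the universal quantifier over $m$, and that is precisely the step you defer as ``combinatorial bookkeeping.'' A seed of valency $m_0$ padded by a subset of $\{0\}\times H$ reaches only valencies up to $m_0+|H|-1$; since every seed inside $K\cong\Z_{p^2}$ has valency less than $p^2$, this caps out below $p^2+|H|-1$, while the target $(n-1)/2$ is of order $p^2|H|/2$ (already for $n=9\cdot 5$ one gets $12$ versus $22$). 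Closing this range forces paddings by larger $\aut(\Z_n)$-controlled blocks (e.g.\ unions of cosets of $K$ or of other characteristic subgroups), together with a check that $\Phi$ can be chosen to preserve such blocks and that the intersection-with-$K$ argument still isolates the seed; the case $|G_p|>p^2$, where $\phi$ must additionally be extended from $K$ to $G_p$ compatibly with the padding, is only gestured at. As written, the proposal is a correct plan plus two correct auxiliary observations, but the theorem itself remains unproved; the closing remark about the endpoint $(n-1)/2$ and complementation explains why the hypothesis is stated as it is but contributes nothing to the argument.
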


The above theorem provides necessary conditions for a cyclic group to be an $m$-DCI-group, but  
it does not yield the complete classification of cyclic $m$-DCI-groups. 
In his survey, Li wrote (see \cite[p.~319]{L02}): 
\begin{quote}
`A natural question is, for a positive integer $m$, 
which cyclic groups are $m$-DCI-groups, and which cyclic groups are 
$m$-CI-groups?'    
\end{quote} 
Motivated by Theorem~\ref{L97}, he also proposed  
a conjecture, which contains sufficient conditions for a cyclic group to be an $m$-DCI- and $m$-CI-group, respectively (see~\cite[Conjecture~7.3]{L02}). This conjecture was proved later 
by Dobson~\cite{D} and it reads as follows.

\begin{thm}[\cite{D}]\label{D}
Let $n=n_1n_2 \in \N$ such that $\gcd(n_1, n_2) = 1$ and $n_1$ divides $4k$, where $k$ is odd and square-free. 
\begin{enumerate}[(1)]
\item If $p > m$ for every prime divisor $p$ of $n_2$, then $\Z_n$ is an 
$m$-DCI-group. 
\item If $p > 2m$ for every prime divisor $p$ of $n_2$, then $\Z_n$ is a 
$2m$-CI-group.
\end{enumerate}
\end{thm}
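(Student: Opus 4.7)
The plan is to apply Babai's criterion: $\cay(\Z_n, S)$ is a CI-(di)graph if and only if any two regular subgroups of $A := \aut(\cay(\Z_n, S))$ isomorphic to $\Z_n$ are conjugate in $A$. Thus, fixing $S \subset \Z_n$ with $|S| \le m$ (respectively, inverse-closed with $|S| \le 2m$ in part~(2)), and picking an arbitrary regular cyclic subgroup $R \le A$ of order $n$, the goal is to conjugate $R$ onto the right-regular copy $\hat\Z_n$ inside $A$. The coprime factorization $n = n_1 n_2$ induces Hall decompositions $\hat\Z_n = \hat H_1 \times \hat H_2$ and $R = R_1 \times R_2$, where $\pi_i$ denotes the set of prime divisors of $n_i$. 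I would first show that the orbits of $\hat H_i$ and the orbits of $R_i$ form the same block system $\mathcal{B}_i$ of $A$ for each $i\in\{1,2\}$: this is a direct consequence of $\gcd(n_1,n_2)=1$ together with the fact that any regular cyclic subgroup of $A$ of order $n$ has a unique Hall $\pi_i$-subgroup, whose orbits are the cosets of the unique subgroup of order $n_i$ in the underlying group $\Z_n$.

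Once the two block systems $\mathcal{B}_1, \mathcal{B}_2$ are identified, the problem of conjugating $R$ onto $\hat\Z_n$ splits into two independent pieces. The $n_1$-piece is handled via Theorem~\ref{muzychuk1}: because $n_1 \mid 4k$ with $k$ odd and square-free, either $n_1\in\{d,2d\}$ with $d$ square-free, in which case $\Z_{n_1}$ is already a DCI-group, or $n_1=4d$ with $d$ odd square-free, which can be reduced to the former by factoring out the unique subgroup of order $2$ and separately analysing $\Z_4$, whose Cayley digraphs of any fixed small valency carry no CI-obstruction. The $n_2$-piece is the technical heart of the proof: the quotient $A/\ker(A\to \sym(\mathcal{B}_1))$ is a transitive permutation group of degree $n_2$ whose degree is divisible only by primes exceeding $m$ (respectively, $2m$), and which contains the two regular cyclic subgroups $\overline{R}_2$ and $\overline{\hat H}_2$ preserving an induced Cayley (di)graph of valency at most $m$ (respectively, $2m$). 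Here I would invoke a structural result on transitive permutation groups containing a regular cyclic subgroup of prime-power degree with all prime divisors large relative to the valency, to force $\overline{R}_2$ and $\overline{\hat H}_2$ to be conjugate. Finally, because $\gcd(n_1,n_2)=1$, the conjugating elements produced on the two factors can be combined by a standard coprime-order gluing into a single element of $A$ conjugating $R$ to $\hat\Z_n$.

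The main obstacle I anticipate is the $n_2$-piece: one has to extract from the valency bound a rigidity statement strong enough to pin down every regular cyclic subgroup of the quotient up to conjugacy, which requires a careful orbital analysis on blocks of prime-power size. The passage from part~(1) to part~(2) is by the standard doubling trick: an undirected Cayley graph on $\Z_n$ of valency $m$ is a Cayley digraph on an inverse-closed connection set of size at most $2m$, which explains why the hypothesis must strengthen from $p > m$ to $p > 2m$.
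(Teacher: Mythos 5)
Your plan reconstructs Dobson's original permutation-group strategy, not anything in this paper: Theorem~\ref{D} is only quoted here from \cite{D}, and the authors instead prove the stronger Theorem~\ref{main1} by a genuinely different, combinatorial route, namely Muzychuk's isomorphism criterion for circulants (Theorem~\ref{muzychuk2}), which replaces all conjugacy questions inside $\aut(\cay(\Z_n,S))$ by the computation of the key $\k(S)$ and an explicit solving set of generalized multiplier permutations; the only group-theoretic input is the reduction Lemma~\ref{reduction} and the two lexicographic-product cases of Lemmas~\ref{lexi1} and~\ref{lexi2}. Incidentally, part~(2) of Theorem~\ref{D} requires no separate ``doubling trick'': since an $i$-DCI-group is an $i$-CI-group, it is literally part~(1) applied with $2m$ in place of $m$ (and note that a Cayley graph of valency $i$ already has $|S|=i$ with $S$ inverse-closed, not $|S|\le 2i$).

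Judged on its own terms, your outline has genuine gaps. First, the assertion that the orbits of the Hall $\pi_i$-subgroups of $R$ and of $\hat\Z_n$ form one and the same block system $\mathcal{B}_i$ of $A$ is not ``a direct consequence of $\gcd(n_1,n_2)=1$'': the two orbit partitions are each invariant under their own regular group, but a priori they are different partitions of $\Z_n$, and $A$ need not admit any block system with blocks of size $n_i$ at all (for instance when $A$ is primitive). Producing a common $A$-invariant, or at least $\langle R,\hat\Z_n\rangle$-invariant, partition is precisely where the hard work sits in \cite{D}, and you have assumed it. Second, the unnamed ``structural result'' for the $n_2$-piece --- the rigidity statement forcing $\overline{R}_2$ and $\overline{\hat H}_2$ to be conjugate when every prime dividing $n_2$ exceeds the valency --- is the entire content of the theorem and is neither stated nor proved; note also that $n_2$ need not be a prime power (it may be, say, $p^3q^2$ with $p,q>m$), so your appeal to prime-power degree does not apply as written. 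Third, the closing ``coprime-order gluing'' is not automatic: even if each induced quotient has a single conjugacy class of regular cyclic subgroups, the two conjugating elements must be realized simultaneously by a single element of $A$, and this compatibility across the two block systems is exactly the point at which CI-type arguments for composite orders typically break down. As it stands the proposal is a plausible roadmap for Dobson's proof rather than a proof.
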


However, it turns out that in order to classify the 
cyclic $m$-DCI- and $m$-CI-groups, the sufficient conditions in Theorem~\ref{D} need to be strengthened. Our main result is the following generalization.

\begin{thm}\label{main1}
Let $n=n_1n_2 \in \mathbb{N}$ such that $\gcd(n_1, n_2) = 1$ and $n_1$ divides $4k$, where $k$ is odd and square-free. 
\begin{enumerate}[(1)]
\item If $p \ge m$ for every prime divisor $p$ of $n_2$, then $\Z_n$ is an 
$m$-DCI-group. 
\item If $2p+1 \ge m$ for every prime divisor $p$ of $n_2$, then $\Z_n$ is an  
$m$-CI-group.
\end{enumerate}
\end{thm}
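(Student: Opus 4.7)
The plan is to adapt Dobson's proof of Theorem~\ref{D} while sharpening its numerical estimates to allow equality in the prime-size inequality. By Babai's criterion, it suffices to show that for every Cayley (di)graph $\Gamma = \cay(\Z_n,S)$ of valency at most $m$ (with $S$ inverse-closed in part (2)), any two regular cyclic subgroups of order $n$ in $A := \aut(\Gamma)$ are conjugate in $A$. Fix two such subgroups $R, R' \le A$, and aim to exhibit $\sigma \in A$ with $(R')^\sigma = R$.

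First, I would exploit the coprime splitting $\Z_n = \Z_{n_1} \times \Z_{n_2}$, which induces Hall decompositions $R = R_1 \times R_2$ and $R' = R'_1 \times R'_2$. The $n_1$-part is under control because $n_1 \mid 4k$ with $k$ odd and square-free: Muzychuk's Theorem~\ref{muzychuk1}, combined with Li's structural result Theorem~\ref{L97} and the explicit description of the Sylow $2$- and Sylow $p$-subgroups of $\Z_{n_1}$, ensures that Cayley (di)graphs on $\Z_{n_1}$ of any valency in the range under consideration are CI-(di)graphs, so $R_1$ is conjugate to $R'_1$ within the block of $A$ preserving the $\Z_{n_1}$-factor. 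For the $n_2$-part, I would reduce to primary components: the hypothesis $p \ge m$ (respectively $2p+1 \ge m$) positions the valency in the regime where Li's determination~\cite{L97} of the $m$-DCI-spectrum of cyclic $p$-groups produces the conjugacy of the $p$-parts of $R$ and $R'$. A standard coprime/Hall-subgroup patching argument from~\cite{D} then combines the local conjugations into a single $\sigma \in A$.

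The main obstacle, and the point at which this theorem strictly improves on~\cite{D}, is the boundary of the prime-size inequality: the case $m = p$ in part (1) and $m = 2p+1$ in part (2). At these tight values, Dobson's counting argument on cosets of the unique subgroup of order $p$ no longer suffices, and I expect a more delicate Schur-ring-style analysis to be required: one must describe the ``critical'' connection sets of size exactly $p$ (respectively $2p+1$) on $\Z_{p^e}$ and, following Muzychuk's techniques from~\cite{M97}, show that no exotic regular cyclic subgroup of $A$ arises from them. In the graph case of part (2), the relaxation from $2p$ to $2p+1$ should reflect the fact that inverse-closed connection sets split into unordered pairs together with a possible single involution, so a single extra generator can be absorbed into the paired analysis at the cost of one additional index. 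Once the boundary case is settled, the remainder of the argument reassembles along Dobson's scheme and yields the desired CI- (respectively DCI-) property.
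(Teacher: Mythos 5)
Your proposal does not constitute a proof: the decisive step is missing. The entire content of Theorem~\ref{main1} beyond Dobson's Theorem~\ref{D} is the boundary case $p = m$ (resp.\ $2p+1 = m$), and at exactly this point you write that you ``expect a more delicate Schur-ring-style analysis to be required'' and describe what such an analysis would have to accomplish without carrying it out. Identifying the critical connection sets of size $p$ (resp.\ $2p+1$) and showing that they produce no non-conjugate regular cyclic subgroups is not a routine sharpening of Dobson's counting argument; it is where all the work lies. In the paper this is done concretely: after reducing to $\sg{S}=\Z_{n'}$, the critical sets are pinned down as $P+s$, $(P+s)\cup(P-s)$ and $(P+s)\cup(P-s)\cup\{\frac{n'}{2}\}$ with $P$ the subgroup of order $p$ (Step~2 of the proof), and these are then handled by Lemma~\ref{lexi1} (imported from \cite{D}) and by a new argument, Lemma~\ref{lexi2}, which uses the $K_{p,p}$-subgraphs of $\cay(\Z_{n'},S)$ to force the $P$-cosets to be an $\aut$-invariant partition. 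None of this is present, even in outline, in your sketch.

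Two further points of your plan would need repair even if the boundary case were supplied. First, the claim that $R_1$ can be conjugated to $R_1'$ ``within the block of $A$ preserving the $\Z_{n_1}$-factor'' presupposes that $\aut(\Gamma)$ respects a decomposition compatible with $\Z_{n_1}\times\Z_{n_2}$; establishing such invariance is precisely the hard technical content of \cite{D} and cannot be asserted as a standard patching step. Second, Li's result (Theorem~\ref{L97}, from \cite{L97}) concerns the $m$-DCI-property of $\Z_{p^2}$ and yields necessary conditions on Sylow subgroups; it does not deliver conjugacy of the $p$-parts for arbitrary prime powers $p^t$ dividing $n_2$, which the hypotheses allow. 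Note finally that the paper deliberately avoids the permutation-group route altogether: it reduces via Lemma~\ref{reduction} to the subgroup generated by $S$ and applies Muzychuk's isomorphism criterion from \cite{M04}, showing that for every non-critical $S$ the key $\k(S)$ is the zero or almost zero key, whence by Lemma~\ref{sufficient} the relevant solving set consists of group automorphisms. If you wish to salvage your approach, you would essentially have to reconstruct the boundary analysis of \cite{D} from scratch at the tight parameter values, which is a substantially harder task than the paper's argument.
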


Combining these stronger sufficient conditions with the known necessary conditions, we  
complete the classification of cyclic $m$-DCI- and $m$-CI-groups.

\begin{thm}\label{main2}\, 
\begin{enumerate}[(1)]
\item If $m \in \N$ and $m \ge 3$, then the group $\Z_n$ is an $m$-DCI-group 
if and only if $n$ is not divisible by $8$ nor by $p^2$ for any odd prime $p < m$.
\item If $m \in \N$ and $m \ge 6$, then the group $\Z_n$ is an $m$-CI-group 
if and only if either $n \in \{ 8, 9, 18 \}$, or $n \notin \{ 8, 9, 18 \}$ and it is not 
divisible by $8$ nor by $p^2$ for any odd prime $p < \frac{m - 1}{2}$.
\end{enumerate}
\end{thm}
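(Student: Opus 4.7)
Theorem~\ref{main2} is obtained by pairing the sufficient conditions of Theorem~\ref{main1} with the already-known necessary conditions of Theorem~\ref{L97} (and its $m$-CI analogue), while accommodating the three exceptional integers $8, 9, 18$ via Muzychuk's Theorem~\ref{muzychuk1}. The plan is to verify sufficiency and necessity separately for each of the two parts.

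\emph{Sufficiency of part~(1).} Given $n$ satisfying ``$8 \nmid n$ and $p^2 \nmid n$ for every odd prime $p < m$'', I factor $n = n_1 n_2$ by letting $n_1$ be the largest divisor of $n$ whose prime factors are either $2$ or odd primes strictly less than $m$. Then $\gcd(n_1, n_2) = 1$, every prime divisor of $n_2$ is at least $m$, and the divisibility hypotheses translate precisely into ``$n_1 \mid 4k$ for some odd square-free $k$'', so Theorem~\ref{main1}(1) applies. \emph{Sufficiency of part~(2)} is identical except that $n_1$ collects the primes $< (m-1)/2$; the condition $2p+1 \ge m$ in Theorem~\ref{main1}(2) then holds for each prime divisor of $n_2$. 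The exceptional $n \in \{8, 9, 18\}$ are full CI-groups by Theorem~\ref{muzychuk1}(2), hence $m$-CI for every $m$.

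\emph{Necessity of part~(1).} Suppose $\Z_n$ has the $m$-DCI property with $m \ge 3$. When $n$ is not a prime-square and $n \ge 2m+1$, Theorem~\ref{L97} asserts that the Sylow $p$-subgroup is $\Z_p$ or $\Z_4$ whenever $p + 1 \le m$; taking $p = 2$ (possible since $m \ge 3$) yields $8 \nmid n$, and taking any odd prime $p < m$ yields $p^2 \nmid n$. The boundary cases---$n$ a prime-square or $n \le 2m$---are finite in number for fixed $m$, and I would dispose of them using the explicit classification of the $m$-DCI-property on $\Z_{p^2}$ given in~\cite{L97}, together with Theorem~\ref{muzychuk1}(1) for the remaining small $n$. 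Necessity in part~(2) follows along the same lines from the corresponding Sylow statement for the $m$-CI property (with threshold $p + 1 \le m$ replaced by $2p + 2 \le m$); in that direction the three small failures $\{8, 9, 18\}$ of the generic Sylow conclusion are precisely the ones flagged by Theorem~\ref{muzychuk1}(2) and must be excluded by hypothesis.

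\emph{Main obstacle.} All conceptual content of the classification has been absorbed into Theorem~\ref{main1}, so the present theorem is essentially a bookkeeping exercise matching the connection-set condition ``$p \ge m$'' (or ``$2p+1 \ge m$'') on $n_2$ to the divisibility statement on $n$. The fiddliest point is that $(m-1)/2$ need not be an integer, so parity of $m$ must be tracked carefully at the boundary prime to see that the thresholds match exactly. A secondary, purely combinatorial, obstacle is showing that the hypotheses $m \ge 3$ and $m \ge 6$ are tight---this amounts to direct verification that $\{8, 9, 18\}$ is not expandable at $m = 6$ and that the exceptional small-$n$ analysis cleanly closes out the finite residue of cases not covered by Theorem~\ref{L97}.
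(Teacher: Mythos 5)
Your sufficiency argument coincides with the paper's: the same factorization $n=n_1n_2$ (with $n_2$ collecting the $p$-parts of the odd primes whose square divides $n$) feeds directly into Theorem~\ref{main1}, and the exceptional $n\in\{8,9,18\}$ are handled by Theorem~\ref{muzychuk1}. That half is correct.

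The necessity half has a genuine gap. The paper does not derive necessity from Theorem~\ref{L97} at all; as Remark~\ref{remark1} states, it exhibits explicit non-CI (di)graphs of the exact critical valencies and lifts them to $\Z_n$ via Lemma~\ref{reduction}: $\cay(\Z_8,\{1,2,5\})$ (valency $3$) when $8\mid n$; $\cay(\Z_{p^2},(\sg{p}+1)\cup\{p\})$ (valency $p+1$) when $p^2\mid n$; and in the undirected case Alspach's valency-$6$ and valency-$8$ examples for $8\mid n$ and $9\mid n$, plus $\cay(\Z_{p^2},(\sg{p}+1)\cup(\sg{p}-1)\cup\{p,-p\})$ for $p\ge 5$. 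Your route through Theorem~\ref{L97} could in principle be made to work for part~(1) (by applying it at the minimal relevant valency and invoking Li's classification of the $m$-DCI-property on $\Z_{p^2}$ for the prime-square boundary), but your proposed disposal of the remaining boundary cases $n\le 2m$ via Theorem~\ref{muzychuk1} does not work: Muzychuk's theorem says only whether $\Z_n$ is a (D)CI-group, and gives no information about the specific valency $i\le m$ at which the $i$-(D)CI property first fails, which is exactly what necessity requires (e.g.\ for $n=8$ and $m\ge 4$ you still need a witness of valency at most $m$, and none of your cited tools produces one). Worse, for part~(2) you invoke ``the corresponding Sylow statement for the $m$-CI property with threshold $2p+2\le m$''; no such statement appears in the paper or among its cited results, so this step is unsupported. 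To close the argument you need the explicit non-CI constructions from \cite{A}, \cite{ET} and \cite{LPX} together with the lifting Lemma~\ref{reduction}; these are not optional bookkeeping but the actual content of the necessity direction.
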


\begin{rem}\label{remark1}
The necessity part of Theorem~\ref{main2} follows directly from known constructions of non-CI-digraphs and graphs 
discovered in~\cite{A,ET,LPX}.  All these examples will appear in 
the course of the proof of Theorem~\ref{main2}. 
\end{rem}

\begin{rem}\label{remark2}
Comparing Theorem~\ref{main2} with the classification of cyclic DCI- and CI-groups given 
in~Theorem~\ref{muzychuk1}, we obtain that the group $\Z_n$ is an $m$-DCI-group for 
$m \ge 3$ but not a DCI-group if there exist a prime $p$ such that $p \ge m$ and 
$p^2 \mid n$; and also that $\Z_n$ is an $m$-CI-group for $m \ge 6$ but not a CI-group 
if $n \notin \{8, 9, 18\}$, and there exists a prime $p$ such that 
$p \ge (m-1)/2$ and $p^2 \mid n$.
\end{rem}

Dobson~\cite{D} obtained Theorem~\ref{D} using permutation group theoretical techniques. 
Our proof of Theorem~\ref{main1} is independent, it relies on a necessary 
and sufficient criterion for two circulant digraphs to be isomorphic due to Muzychuk~\cite{M04}. In the next section we give a short overview of Muzychuk's criterion, Theorems~\ref{main1} and \ref{main2} will then be derived in 
Section~\ref{sec_proofs}. 
\section{Isomorphic circulant digraphs}\label{sec_preliminaries}

\subsection{Key spaces}\label{subsec_keys}
For $n \in \N$, we use the symbol $[n]$ to denote the set $\{1,\ldots,n\}$.
Let $n=p_1^{t_1}\cdots p_l^{t_l}$ be the prime decomposition of $n$.
In what follows, $\Z_n$ will be identified with the direct sum 
$\Z_{p_1^{t_1}} \oplus \cdots \oplus \Z_{p_l^{t_l}}$.  Therefore, an element 
$x  \in \Z_n$ will also be represented as the $l$-tuple $x=(x_1,\ldots,x_l)$, where  
$x_i \in \Z_{p_i^{t_i}}$ for every $i \in [l]$. 

\begin{defn}\label{Kn}
The \emph{key space} $\K_n$ is the direct product 
\[
\K_n:= \K_{p_1^{t_1}} \times \cdots \times \K_{p_l^{t_l}},
\]
where for a prime power $p^t$, $t \ge 1$, 
$\K_{p^t}$ consists of the $t$-tuples $(k_{1}, \ldots, k_{t}) \in \N_{0}^{t}$ such that,
\begin{itemize}
\item $\forall i \in [t],~0 \le k_{i} < i$,
\item $\forall i \in [t-1],~k_{i} \le k_{i+1}$.
\end{itemize}
\end{defn}

The elements of $\K_n$ are called \emph{keys}. 
Thus a key $\k \in \K_n$ is a two-dimensional integer array, which will be written as  
\[
\k=(\k_1,\ldots,\k_l)=(k_{ij})_{i \in [l], j \in [t_i]},
\] 
where $\k_i=(k_{i1},\ldots,k_{it_i}) \in \K_{p_i^{t_i}}$. 

The \emph{zero key} $\z$ in $\K_n$ is defined as $\z=(z_{ij})_{i\in [l],j \in [t_i]}$  
with $z_{ij}=0$ for every $i \in [l], j \in [t_i]$. If $n \equiv 4 \pmod 8$, then the 
\emph{almost zero key} $\z_*$ in 
$\K_n$ is defined as $\z_*=(z_{ij})_{i\in [l], j \in [t_i]}$ with  
\[
z_{ij}=\begin{cases}
1 & \text{if}~p_i^{t_i}=4~\text{and}~j=2, \\
0 & \text{otherwise}.
\end{cases}
\]

For two keys $\k, \m \in \K_n$, we write $\k \le \m$ if 
$k_{ij} \le m_{ij}$, for every  $i \in [l]$ and 
$j \in [t_i]$. The relation $\le$ is a partial order on $\K_n$. 
The poset $(\K_n,\le)$ is a lattice with meet $\wedge$ and join $\vee$ 
operations defined as
\[
\forall i \in [l],\, \forall j \in [t_i],~
(\k \wedge \m)_{ij}=\min(k_{ij},m_{ij})~\text{and}~(\k \vee \m)_{ij}=
\max(k_{ij},m_{ij}).
\]

\subsection{Key partitions}\label{subsec_keypartitions}
The set of all partitions of $\Z_n$ will be denoted by $\parti(\Z_n)$. 
For two partitions $\Sigma, \Delta \in \parti(\Z_n)$,   
$\Delta$ is said to be a \emph{refinement} of $\Sigma$, denoted by 
$\Sigma \sqsubseteq \Delta$, if every class of $\Sigma$ is union of classes of $\Delta$. The relation $\sqsubseteq$ is a partial order on $\parti(\Z_n)$. 
The poset $(\parti(\Z_n),\sqsubseteq)$ is a lattice. We write 
$\Sigma \wedge \Delta$ for the meet and  $\Sigma \vee \Delta$ for the join 
of the partitions $\Sigma$ and $\Delta$.

\begin{defn}\label{Sigmak-p}
Let $n=p^t$ for a prime $p$ and let $\k=(k_1,\ldots,k_t) \in \K_{p^t}$. 
The \emph{key partition} $\Sigma(\k)$ is the partition of 
$\Z_{p^t}$ defined as
\[
\Sigma(\k)=\big\{ \{0\} \big\} \cup \big\{ P_{k_{\alpha(x)}}+x \, \mid \, x \in \Z_{p^t} 
\setminus \{0\} \big\},
\]
where for $x \in \Z_{p^t} \setminus \{0\}$, $\alpha(x)$ is the number in $[t]$ defined by the 
property that $p^{\alpha(x)}$ is the order of $x$ as an element in the group $(\Z_{p^t},+)$;   and for $j \in \{0,1,\ldots,t\}$, $P_j$ is the subgroup of $\Z_{p^t}$ of order $p^j$.
\end{defn}

\begin{defn}\label{Sigmak-n}
Let $n\in \N$ be a number greater than $1$ with prime decomposition $n=p_1^{t_1}\cdots p_l^{t_l}$ and 
let $\k=(\k_1,\ldots,\k_l) \in \K_n$. Then the \emph{key partition} $\Sigma(\k)$ is the partition of $\Z_n$ defined as  
\[
\Sigma(\k) = 
\big\{\, (S_1, \ldots , S_l) \, \mid \, S_i \in \Sigma(\k_i),  i \in [l]\, \big\}.
\]
\end{defn}

\begin{prop}[{\rm \cite[Proposition~2.2]{M04}}]\label{prop}
Let $n \in \N$ and $\k, \m \in \K_n$. If $\k \le \m$ then $\Sigma(\m) \sqsubseteq \Sigma(\k)$.
\end{prop}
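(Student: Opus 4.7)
My plan is to first prove the prime-power case $n = p^t$ and then lift the result to general $n$ via the product structure in Definition~\ref{Sigmak-n}.

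In the prime-power case, after observing that the class $\{0\}$ is trivially common to both partitions, I would reduce to showing that each class of $\Sigma(\m)$ of the form $C = P_{m_{\alpha(x)}} + x$ (with $x \ne 0$) decomposes as a union of $\Sigma(\k)$-classes. The pivotal step is to verify that every $y \in C$ satisfies $\alpha(y) = \alpha(x)$: writing $y = u + x$ with $u \in P_{m_{\alpha(x)}}$, the constraint $m_{\alpha(x)} < \alpha(x)$ from the definition of $\K_{p^t}$ forces $u$ to have order at most $p^{m_{\alpha(x)}} < p^{\alpha(x)}$, and then the computation $p^{\alpha(x)-1}y = p^{\alpha(x)-1}x \ne 0$ while $p^{\alpha(x)}y = 0$ pins the order of $y$ at exactly $p^{\alpha(x)}$.

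Once $\alpha$-invariance along $C$ is in hand, the $\Sigma(\k)$-class of any $y \in C$ reads $P_{k_{\alpha(y)}} + y = P_{k_{\alpha(x)}} + y = (P_{k_{\alpha(x)}} + u) + x$, which since $k_{\alpha(x)} \le m_{\alpha(x)}$ gives $P_{k_{\alpha(x)}} \subseteq P_{m_{\alpha(x)}}$ and hence sits inside $P_{m_{\alpha(x)}} + x = C$. Thus every $\Sigma(\k)$-class that meets $C$ lies in $C$, establishing the refinement $\Sigma(\m) \sqsubseteq \Sigma(\k)$ in the prime-power case.

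For general $n$, I would invoke the product definition $\Sigma(\k) = \prod_i \Sigma(\k_i)$ (and similarly for $\m$) to reduce to the coordinatewise version just treated: taking Cartesian products of the splittings of each $T_i \in \Sigma(\m_i)$ into $\Sigma(\k_i)$-classes exhibits each class $(T_1, \ldots, T_l)$ of $\Sigma(\m)$ as a disjoint union of classes of $\Sigma(\k)$. The main obstacle is the $\alpha$-invariance step in the prime-power case; it is what simultaneously ensures that $\Sigma(\k)$ is a bona fide partition and that the refinement inclusion goes through — after that, the argument becomes essentially formal bookkeeping.
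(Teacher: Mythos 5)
Your argument is correct. The paper itself offers no proof of this statement --- it is imported verbatim as \cite[Proposition~2.2]{M04} --- so there is nothing internal to compare against; what you have written is a valid self-contained verification of the cited result. The two essential points are both present and handled properly: (a) the order-invariance step, where $u \in P_{m_{\alpha(x)}}$ together with $m_{\alpha(x)} < \alpha(x)$ (forced by the defining condition $k_i < i$ on keys) gives $p^{\alpha(x)-1}u = 0$ and hence $\alpha(u+x) = \alpha(x)$, which simultaneously shows that $\Sigma(\m)$ and $\Sigma(\k)$ are genuine partitions and that $\alpha$ is constant on each class; and (b) the containment $P_{k_{\alpha(x)}} + y \subseteq P_{m_{\alpha(x)}} + x$ for $y$ in the class, using $k_{\alpha(x)} \le m_{\alpha(x)}$, which shows every $\Sigma(\k)$-class meeting a $\Sigma(\m)$-class $C$ lies inside $C$. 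The reduction of the general case to the prime-power case via the Cartesian-product structure of Definition~\ref{Sigmak-n} is routine, as you say. The only cosmetic remark is that your text is a plan rather than a finished write-up; when expanding it, make explicit that the well-definedness of the $\Sigma(\k)$-classes (needed to identify the class through $y' \in C$ with $P_{k_{\alpha(y')}} + y'$) is the special case $\m = \k$ of the same invariance computation.
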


\begin{defn}
Let $n \in \N$ and let $\Pi \in \text{Part}(\Z_n)$ be an arbitrary partition of $\Z_n$. 
The \emph{key partition of} $\Pi$ is the coarsest key partition $\Sigma(\m)$ that  
refines $\Pi$; or more formally,  
\begin{itemize}
\item $\Pi \sqsubseteq \Sigma(\m)$. 
\item If $\Pi \sqsubseteq \Sigma(\k)$ for some $\k \in \K_{n}$, then 
$\Sigma(\m) \sqsubseteq \Sigma(\k)$.
\end{itemize}
The above key $\m$ is referred to as the \emph{key of $\Pi$}.
\end{defn}

\begin{defn}\label{kS}
Let $n \in \N$ and $S$ be a nonempty subset of $\Z_n$. 
The \emph{key} of $S$, denoted by $\k(S)$, is the key of the partition 
$\{ S, \Z_{n} \setminus S \}$. 
\end{defn}

\subsection{Generalized multipliers}\label{subsec_genmulti}

We consider first the generalized multipliers of cyclic groups of prime power order. 

\begin{defn}\label{vecm-p}
Let $p \in \N$ be a prime and $t \in \N$. A \emph{generalized multiplier} of 
$\Z_{p^t}$ is a $t$-tuple $\vec{m} = (m_1, \ldots, m_t) \in \N^t$ such that 
$\gcd(m_i, p) = 1$ for every $i \in [t]$. 
\end{defn}

The set of all generalized multipliers of $\Z_{p^t}$ is denoted by $\Z_{p^t}^{**}$. 
We associate next a permutation of $\Z_{p^t}$ with any 
generalized multiplier in $\Z_{p^t}^{**}$.

\begin{defn}\label{fm-p}
Let $p \in \N$ be a prime and $t \in \N$. For $\vec{m} = (m_1, \ldots, m_t) \in \Z_{p^t}^{**}$, the \emph{generalized multiplier function} $f_{\vec{m}}$ is the permutation of 
$\Z_{p^t}$, defined as
\[
\forall x \in \Z_{p^t},~
x^{f_{\Vec{m}}} = \sum_{i=0}^{t - 1} m_{t - i} x_{i} p^{i} \!\!\!\!\pmod{p^t}, 
\]
where $\sum_{i=0}^{t - 1} x_{i} p^{i}$ is the $p$-adic decomposition of $x$, i.e., $x_i \in \{ 0, \ldots, p - 1 \}$ for every $i \in \{ 0,\ldots, t - 1 \}$.
\end{defn}

It follows from the above definition that, if $\vec{m}=(m_1,\ldots,m_t)$ and 
$\vec{m'}=(m'_1,\ldots,m'_t)$ are two generalized 
multipliers of $\Z_{p^t}$ such that 
\[
\forall i \in [t],~m_i \equiv m'_i \!\!\!\! \pmod{p^i},
\]
then $f_{\vec{m}}=f_{\vec{m'}}$. Thus, to have all generalized multiplier functions, it is sufficient to 
consider only those generalized multipliers $\vec{m}=(m_1,\ldots,m_t)$, which satisfy  
$m_i \in [p^i-1]$ for every $i \in [t]$. 

Fix a key $\k \in \K_{p^t}$. We define a subset $\Z_{p^t}^{**}(\k) \subseteq \Z_{p^t}^{**}$ as 
follows. If $t>1$, then let
\[
\Z_{p^t}^{**}(\k) =
\big\{ (m_1,\ldots,m_t)\in \Z_{p^t}^{**} \, \mid \, 
\forall i \in [t-1],~m_{i+1} \equiv m_i\!\!\!\!\pmod {p^{i-k_{i+1}}} \, \big\};
\]
and if $t=1$, then let $\Z_p^{**}(\k)=\Z_p^{**}$.

A $\Sigma(\k)$-class different from $\{0\}$ is in the form $P_{k_i}+x$, where $P_{k_i}$ is the subgroup of $\Z_{p^t}$ of order $p^{k_i}$, and $x$ is a non-zero element of order $p^i$ in $(\Z_{p^t},+)$. It is easy to see that for 
every $\vec{m}=(m_1,\ldots,m_t) \in \Z_{p^t}^{**}(\k)$, $x^{f_{\vec{m}}}$ has order $p^i$, and 
\[
(P_{k_i}+x)^{f_{\vec{m}}}=P_{k_i}+x^{f_{\vec{m}}}.
\]
These show that $f_{\vec{m}}$ induces a permutation of 
$\Sigma(\k)$. Furthermore, one can show that, if $\vec{m'}=(m'_1,\ldots,m'_t)$ is another generalized multiplier from $\Z_{p^t}^{**}(\k)$ such that 
$m'_i \equiv m_i\!\! \pmod {p^{i-k_i}}$, then 
$(P_{k_i}+x)^{f_{\vec{m}}}=(P_{k_i}+x)^{f_{\vec{m'}}}$. 
In view of this, to have all permutations of $\Sigma(\k)$ induced by the generalized multipliers in $\Z_{p^t}^{**}(\k)$, it suffices to 
consider only those generalized multipliers $\vec{m}=(m_1,\ldots,m_t) \in \Z_{p^t}^{**}(\k)$,  which satisfy $m_i \in [p^{i-k_i}-1]$ for every $i \in [t]$. 
We refer to the latter generalized multipliers as \emph{genuine} and denote their set by 
$\Z_{p^t}^{**}(\k)^\circ$. More formally, 
\[  
\Z_{p^t}^{**}(\k)^\circ = 
\big\{ (m_1,\ldots,m_t)\in \Z_{p^t}^{**}(\k) \, \mid \, 
\forall i \in [t],~m_i \in [p^{i-k_i}-1]\, \big\}. 
\]

Let $n \in \N$ be an arbitrary number with prime decomposition $n=p_1^{t_1}\cdots p_l^{t_l}$, and let 
$\k=(\k_1,\ldots,\k_l) \in \K_n$ be an arbitrary key. The above concepts are generalized to $\Z_n$ as follows:
\begin{align*}
\Z_{n}^{**}  &= \Z_{p_1^{t_1}}^{**} \times \cdots \times \Z_{p_l^{t_l}}^{**},  \\
\Z_{n}^{**}(\k) &= \Z_{p_1^{t_1}}^{**}(\k_1) \times \cdots \times 
\Z_{p_l^{t_l}}^{**}(\k_l),  \\
\Z_{n}^{**}(\k)^\circ &= \Z_{p_1^{t_1}}^{**}(\k_1)^\circ \times \cdots \times 
\Z_{p_l^{t_l}}^{**}(\k_l)^\circ.
\end{align*}

For $\vec{m} = (\Vec{m_1}, \ldots, \Vec{m_t}) \in \Z_{n}^{**}$, the generalized 
multiplier function $f_{\vec{m}}$ is the permutation of $\Z_n$ acting as 
\[
\forall (x_1,\ldots,x_l) \in \Z_n,~
(x_1,\ldots,x_l)^{f_{\Vec{m}}} = 
\big(\, x_{1}^{f_{\Vec{m_{1}}}}, \ldots, x_{l}^{f_{\Vec{m_{l}}}} \big).
\] 

\begin{defn}
Let $n \in \N$ and let $\k \in \K_n$. The \emph{solving set} $P(\k)$ is the 
following set of permutations of $\Z_n$:
\[
P(\k) = \{ f_{\Vec{m}} \, \mid \, \Vec{m} \in \Z_{n}^{**}(\k)^\circ \}.
\]
\end{defn}

The example below is meant to illustrate all the concepts introduced in this section.

\begin{exmp}\label{Kp2}
We determine below the solving sets $P(\k)$, where $\k \in \K_{p^2}$ and $p$ is a 
fixed prime. 

Due to Definition~\ref{Kn}, $\K_{p^2}$ consists of two keys: $(0,0)$ and $(0,1)$. 
Let $\Vec{m}=(m_1,m_2) \in \Z_{p^2}^{**}((0,i))^\circ$, where $i \in \{0,1\}$. 
Then 
\[
m_1 \in [p-1],~m_2 \in [p^{2-i}-1],~\gcd(m_2,p)=1~\text{and}~m_2 \equiv m_1\!\!\!\!\pmod {p^{1-i}}.
\]
Due to Definition~\ref{fm-p}, 
$(x_0+x_1p)^{f_{\Vec{m}}}=m_2x_0+m_1x_1p$. Note that, if $i=0$, then 
$m_1$ is uniquely determined by $m_2$. Moreover, 
$m_1p \equiv m_2p \pmod {p^2}$, and hence we have $(x_0+x_1p)^{f_{\Vec{m}}}=
m_2(x_0+x_1p)$. We conclude that 
\begin{align}
P((0,0)) &= \big\{ x \mapsto mx \, \mid \, 
m \in [p^2-1], \gcd(m,p)=1 \big\}=\aut(\Z_{p^2}), \nonumber \\ 
P((0,1)) &= \big\{ x_0+x_1p \mapsto m_2x_0+m_1 x_1p \, \mid \, 
m_1,m_2 \in [p-1]\big\}.
\label{eq:01}
\end{align} 
\hfill $\square$
\end{exmp}

In the next lemma we generalize the above example to solving sets 
$P(\z)$, where $\z$ is the zero key in $\K_{p^t}$, i.e., $\z=(0,\ldots,0)$.

\begin{lem}\label{zerokey}
Let $p$ be a prime and $t \in \N$. 
For the zero key $\z \in \K_{p^t}$, $P(\z) = \aut(\Z_{p^t})$.
\end{lem}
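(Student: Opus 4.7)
The plan is to show that every permutation in $P(\z)$ coincides with multiplication by a unit of $\Z_{p^t}$, and conversely that every such multiplication arises, which would give the set equality since $|\aut(\Z_{p^t})|=\phi(p^t)=p^{t-1}(p-1)$.

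First I would unpack the defining constraints. A genuine generalized multiplier $\vec{m}=(m_1,\ldots,m_t)\in\Z_{p^t}^{**}(\z)^\circ$ satisfies $m_i\in[p^i-1]$ and $\gcd(m_i,p)=1$ for each $i\in[t]$, together with $m_{i+1}\equiv m_i\pmod{p^{i}}$ for $i\in[t-1]$ (since every coordinate of $\z$ is $0$). This chain of congruences, combined with the range constraints $m_i\in[p^i-1]$, shows that $\vec{m}$ is determined by $m_t$: indeed $m_i$ must be the unique representative of the residue class of $m_t$ modulo $p^i$ lying in $[p^i-1]$. In particular, $|\Z_{p^t}^{**}(\z)^\circ|=|\{m_t\in[p^t-1]:\gcd(m_t,p)=1\}|=\phi(p^t)$.

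Next I would compute $f_{\vec m}$ explicitly. Iterating the relations $m_{j}\equiv m_{j+1}\pmod{p^{j}}$ for $j=t-i,t-i+1,\ldots,t-1$ yields $m_{t-i}\equiv m_t\pmod{p^{t-i}}$, hence $m_{t-i}\,p^{i}\equiv m_t\,p^{i}\pmod{p^t}$ for every $i\in\{0,\ldots,t-1\}$. Substituting into Definition~\ref{fm-p} and using the $p$-adic decomposition $x=\sum_{i=0}^{t-1}x_i p^i$ gives
\[
x^{f_{\vec m}}=\sum_{i=0}^{t-1}m_{t-i}\,x_i\,p^{i}\equiv m_t\sum_{i=0}^{t-1}x_i\,p^{i}\equiv m_t\,x\pmod{p^t}.
\]
Thus $f_{\vec m}$ is the multiplication map $x\mapsto m_t x$ on $\Z_{p^t}$, which is an element of $\aut(\Z_{p^t})$.

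Finally I would establish both inclusions. The previous step gives $P(\z)\subseteq\aut(\Z_{p^t})$. For the reverse inclusion, given any $m\in[p^t-1]$ with $\gcd(m,p)=1$, set $m_i$ to be the unique element of $[p^i-1]$ with $m_i\equiv m\pmod{p^i}$; then $\gcd(m_i,p)=1$ and $m_{i+1}\equiv m_i\pmod{p^i}$, so $\vec{m}=(m_1,\ldots,m_t)\in\Z_{p^t}^{**}(\z)^\circ$ with $m_t=m$, and the computation above shows $f_{\vec m}$ equals multiplication by $m$. Since every automorphism of $\Z_{p^t}$ has this form, $\aut(\Z_{p^t})\subseteq P(\z)$, completing the proof. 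The case $t=1$ is immediate from the conventions since $\Z_p^{**}(\z)=\Z_p^{**}$ and $f_{(m_1)}(x)=m_1 x$, so nothing obstructs the argument; the only point that needs care is the telescoping of the congruences to get $m_{t-i}\equiv m_t\pmod{p^{t-i}}$, but this is a routine induction.
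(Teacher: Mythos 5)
Your proposal is correct and follows essentially the same route as the paper: unpack the constraints defining $\Z_{p^t}^{**}(\z)^\circ$, telescope the congruences to get $m_{t-i}\equiv m_t \pmod{p^{t-i}}$, and conclude that $f_{\vec m}$ is multiplication by $m_t$. The only difference is that you explicitly verify the reverse inclusion $\aut(\Z_{p^t})\subseteq P(\z)$ by constructing the multiplier with prescribed $m_t$, a point the paper leaves implicit.
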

\begin{proof}
Let $f_{\Vec{m}} \in P(\z)$, where 
$\Vec{m}=(m_1,\ldots,m_t) \in \Z_{p^t}^{**}(\z)^\circ$. 
Note that, for every $i \in [t], m_i \in [p^i-1]$ and $\gcd(m_i,p)=1$. We are going to show 
that $x^{f_{\Vec{m}}} \equiv m_tx \pmod {p^t}$ holds for every $x \in \Z_{p^t}$. 

Fix an arbitrary $x \in \Z_{p^t}$ and let 
$x=\sum_{i=0}^{t-1}x_ip^i$ be its $p$-adic decomposition. 
Due to Definition~\ref{fm-p}, 
\begin{equation}\label{eq:1}
x^{f_{\Vec{m}}}=m_tx_0+m_{t-1}x_1p+\cdots+m_1x_{t-1}p^{t-1}.
\end{equation}

On the other hand, it follows from the definition of $\Z_{p^t}^{**}(\z)^\circ$ that,  
\[
\forall i \in [t-1],~m_{i+1} \equiv m_i \!\!\!\!\pmod {p^i}.
\]
This yields  
\[
\forall i \in [t],~m_t \equiv m_i \!\!\!\!\pmod {p^i}.
\]
Thus, $m_tp^{t-i} \equiv m_ip^{t-i} \pmod {p^t}$ for every 
$i \in [t]$, and then substituting this in \eqref{eq:1}, we obtain that 
$x^{f_{\Vec{m}}} \equiv m_tx \pmod {p^t}$, as required.
\end{proof}

\subsection{Muzychuk's criterion}\label{subsec_criterion}
Now we are prepared to present Muzychuk's criterion for two 
circulant digraphs to be isomorphic.

\begin{thm}[{\rm \cite{M04}}] \label{muzychuk2}
Let $n \in \N$ such that $n > 1$ and let $\cay(\Z_n,S)$ and $\cay(\Z_n,T)$ be two circulant 
digraphs. Then we have the following.
\begin{enumerate}[(1)]
\item If $\k(S) \neq \k(T)$, then $\cay(\Z_n,S) \ncong \cay(\Z_n,T)$.
\item If $\k(S) = \k(T) = \k$, then the following statements are equivalent.
\begin{enumerate}[(i)]
\item $\cay(\Z_n,S) \cong \cay(\Z_n,T)$.
\item There exists a permutation $f_{\Vec{m}} \in P(\k)$ such that
\[
\cay(\Z_n,S)^{f_{\Vec{m}}}=\cay(\Z_n,T).
\]
\item There exists a permutation $f_{\Vec{m}} \in P(\k)$ such that 
$S^{f_{\Vec{m}}} = T$.
\end{enumerate}
\end{enumerate}
\end{thm}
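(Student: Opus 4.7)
The plan is to follow Muzychuk's strategy. The implications (ii) $\Rightarrow$ (i) and (ii) $\Rightarrow$ (iii) are immediate (the latter since $f_{\vec{m}}$ fixes $0$ by Definition~\ref{fm-p}, so the out-neighbors of $0$ must match in both digraphs, giving $S^{f_{\vec{m}}} = T$). The substantive content therefore lies in (iii) $\Rightarrow$ (ii), (i) $\Rightarrow$ (iii), and Part (1).

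For (iii) $\Rightarrow$ (ii), one needs to verify arc preservation, i.e.\ that $f_{\vec{m}}(s+g) - f_{\vec{m}}(g) \in T$ whenever $s \in S$ and $g \in \Z_n$. Although $f_{\vec{m}}$ is not additive in general, the constraints $f_{\vec{m}} \in P(\k(S))$ combined with $S$ being a union of $\Sigma(\k(S))$-classes should force exactly the compatibility needed -- essentially a direct $p$-adic computation using the congruences $m_{i+1} \equiv m_i \pmod{p^{i - k_{i+1}}}$ cut out by $\Z_{p^t}^{**}(\k)$. For Part (1), I would first reduce to isomorphisms $\phi$ fixing $0$ by post-composing with a translation, so that $\phi(S) = T$; then introduce the Schur ring generated by $S$ under the Schur--Wielandt closure and argue that $\Sigma(\k(S))$ is the coarsest key partition refining its basic-set partition. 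Since $\phi$ induces an isomorphism of Schur rings, the keys must coincide, giving Part (1).

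For (i) $\Rightarrow$ (iii), with $\k(S) = \k(T) = \k$, I would decompose $\Z_n$ via CRT into prime-power components and induct on the prime-power height $t$: the base case $t = 1$ is Turner's classical theorem that any two isomorphic circulant digraphs on $\Z_p$ are related by a multiplier, while the inductive step analyzes how $\phi$ permutes the basic sets of the Schur ring within each layer $\{x \in \Z_{p^t} : \alpha(x) = i\}$. Rigidity of S-rings over $\Z_{p^t}$ should force this action to be multiplication by some $m_i \in [p^{i - k_i} - 1]$, and the scalars assembled layer by layer yield the desired $\vec{m} \in \Z_{p^t}^{**}(\k)^\circ$. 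The main obstacle is verifying that the $m_i$ automatically satisfy the compatibility congruences $m_{i+1} \equiv m_i \pmod{p^{i - k_{i+1}}}$ defining $\Z_{p^t}^{**}(\k)$, i.e.\ that the action of $\phi$ on consecutive layers is constrained by how basic sets interact with $P_{k_{i+1}}$, matching exactly the freedom available in $P(\k)$ as previewed in Example~\ref{Kp2} and Lemma~\ref{zerokey}. The cleanest route to this step likely proceeds through the classification of S-rings over cyclic $p$-groups.
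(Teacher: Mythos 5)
The first thing to say is that the paper does not prove this statement at all: Theorem~\ref{muzychuk2} is imported verbatim from Muzychuk~\cite{M04}, whose proof occupies essentially the whole of that forty-page paper, so there is no in-paper argument to compare yours against. Judged on its own terms, your proposal correctly identifies the logical skeleton --- (ii)~$\Rightarrow$~(i) and (ii)~$\Rightarrow$~(iii) are indeed immediate (the latter because $f_{\vec{m}}$ fixes $0$, so it must carry the out-neighbourhood $S$ of $0$ onto the out-neighbourhood $T$ of $0$), and the burden falls on (iii)~$\Rightarrow$~(ii), (i)~$\Rightarrow$~(iii), and Part~(1) --- and it names the right tools (Schur--Wielandt closure, S-rings over cyclic groups, reduction to prime powers). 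But as a proof it has genuine gaps precisely where the theorem is hard.

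Concretely: for (i)~$\Rightarrow$~(iii) you write that ``rigidity of S-rings over $\Z_{p^t}$ should force this action to be multiplication by some $m_i$,'' but this is the conclusion, not an argument. S-rings over cyclic $p$-groups are in general wedge products (generalized wreath products), and it is exactly this wreath structure that creates isomorphisms which are \emph{not} multipliers layer by layer in any naive sense; the key $\k(S)$ and the congruences $m_{i+1}\equiv m_i \pmod{p^{i-k_{i+1}}}$ are designed to quantify the residual freedom, and deriving them requires the full structure theory, not an appeal to rigidity. Likewise, the CRT reduction in your inductive set-up is not free: an abstract isomorphism between circulant digraphs on $\Z_n$ does not a priori respect the decomposition $\Z_n\cong\Z_{p_1^{t_1}}\oplus\cdots\oplus\Z_{p_l^{t_l}}$, and controlling the interaction between distinct primes (via tensor and wedge decompositions of the ambient S-ring) is a substantial part of~\cite{M04}. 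Finally, for Part~(1) the assertion that the key is an invariant of the S-ring isomorphism class is again the content of Muzychuk's analysis rather than a formal consequence of $\phi$ inducing an S-ring isomorphism. Your outline is directionally sound, but each of these three points is a missing proof, not a missing detail; the honest course here is to cite~\cite{M04}, as the paper does.
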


Using the above theorem, we obtain the following sufficient condition 
for a circulant digraph to be CI.

\begin{lem}\label{sufficient}
Let $n \in \N$ such that $n > 1$ and $S \subseteq \Z_n \setminus \{0\}$ be a subset such that  
$\k(S)=\z$ or $\z_*$ (i.e., the zero key or the almost zero key, 
resp. in $\Z_n$). Then $\cay(\Z_n,S)$ is a CI-digraph.
\end{lem}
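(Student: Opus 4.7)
The plan is to invoke Muzychuk's criterion (Theorem~\ref{muzychuk2}) and reduce the CI-problem to verifying the inclusion $P(\k) \subseteq \aut(\Z_n)$ for $\k \in \{\z, \z_*\}$. Suppose $T \subseteq \Z_n \setminus \{0\}$ satisfies $\cay(\Z_n,S) \cong \cay(\Z_n,T)$. Part~(1) of Theorem~\ref{muzychuk2} forces $\k(T) = \k(S) = \k$, and part~(2)(iii) then supplies some $f_{\vec{m}} \in P(\k)$ with $S^{f_{\vec{m}}} = T$. Once I know $f_{\vec{m}} \in \aut(\Z_n)$, this $f_{\vec{m}}$ will serve as the required Cayley isomorphism, proving that $\cay(\Z_n,S)$ is a CI-digraph.

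For $\k = \z$, the componentwise definitions in Section~\ref{subsec_genmulti} show that $P(\z)$ acts as the direct product of the primary solving sets $P(\z_i)$ on $\Z_n = \bigoplus_{i=1}^l \Z_{p_i^{t_i}}$, where $\z_i$ is the zero key in $\K_{p_i^{t_i}}$. By Lemma~\ref{zerokey} each factor $P(\z_i)$ equals $\aut(\Z_{p_i^{t_i}})$, and under the natural decomposition $\aut(\Z_n) \cong \prod_i \aut(\Z_{p_i^{t_i}})$ this yields $P(\z) = \aut(\Z_n)$.

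For $\k = \z_*$, the congruence $n \equiv 4 \pmod 8$ underlying the definition of the almost zero key forces exactly one primary component to be $\Z_4$; call its index $i_0$. On every component $i \neq i_0$ the key $\k_i$ is zero, so as above $P(\z_i) = \aut(\Z_{p_i^{t_i}})$. On the $i_0$-th component the key is $(0,1)$ with $p = 2$, and here I appeal to Example~\ref{Kp2}: the parameters $m_1, m_2$ defining an element of $P((0,1))$ range over $[p-1] = \{1\}$, so $P((0,1))$ collapses to the identity on $\Z_4$, which is trivially in $\aut(\Z_4)$. Taking the componentwise product gives $P(\z_*) \subseteq \aut(\Z_n)$ and completes the argument.

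The only subtle point is the almost zero case: for primes $p \ge 3$ the solving set $P((0,1)) \subseteq \sym(\Z_{p^2})$ typically contains non-automorphisms, as is visible from the explicit description in Example~\ref{Kp2}. The lemma therefore genuinely relies on the fact that the almost zero key is defined \emph{only} when the relevant prime power equals $4$, at which point the trivial parameter range $\{1\}$ collapses $P((0,1))$ to the identity permutation. No other calculations are required beyond assembling the already-established pieces.
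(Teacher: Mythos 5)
Your proposal is correct and follows essentially the same route as the paper: apply Theorem~\ref{muzychuk2} to obtain $f_{\vec{m}} \in P(\k)$ with $S^{f_{\vec{m}}}=T$, handle the zero-key components via Lemma~\ref{zerokey}, and observe from Example~\ref{Kp2} (equation~\eqref{eq:01}) that the $(0,1)$-key component on $\Z_4$ forces $m_1=m_2=1$, so $f_{\vec{m}}$ is an automorphism of $\Z_n$. The only cosmetic difference is that you package the argument as the inclusion $P(\k)\subseteq\aut(\Z_n)$ up front, whereas the paper verifies it component by component for the specific $f_{\vec{m}}$ produced by the criterion.
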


\begin{proof}
Suppose that $\cay(\Z_n,S) \cong \cay(\Z_n,T)$ for some subset $T \subset \Z_n$.  
We have to show that $S^\sigma=T$ for some $\sigma \in \aut(\Z_n)$. 

By Theorem~\ref{muzychuk2}, $\k(T) = \k(S)$ and there exists 
$f_{\Vec{m}} \in P(\k)$, such that $S^{f_{\Vec{m}}} = T$. 
Let $n$ have prime decomposition $n=p_1^{t_1}\cdots p_l^{t_l}$. 
Then $\k=(\k_1,\ldots,\k_l) \in \K_{p_1^{t_1}} \times \cdots \times \K_{p_l^{t_l}}$, and 
the action of $f_{\Vec{m}}$ on $\Z_n$ can be described as 
\[
(x_1,\ldots,x_l)^{f_{\Vec{m}}}=\big(\, 
x_1^{f_{\Vec{m_1}}},\ldots,x_l^{f_{\Vec{m_l}}}\, \big),
\]
where for every $i \in [l]$, $f_{\Vec{m_i}} \in P(\k_i)$.
If $\k_i$ is the zero key in $\K_{p_i^{t_i}}$, then 
$f_{\Vec{m_i}} \in \aut(\Z_{p_i^{t_i}})$ by Lemma~\ref{zerokey}. 
On the other hand, as $\k=\z$ or $\k=\z_*$, we conclude that 
$\k_i$ is the zero key, unless, $p_i^{t_i}=4$ and $\k_i=(0,1)$. 
In the later case, we see in \eqref{eq:01} that $f_{\Vec{m_i}}$ is the 
identity permutation of $\Z_4$. We conclude that 
$f_{\Vec{m}} \in \aut(\Z_n)$. 
\end{proof}
\section{Proof of the main results}\label{sec_proofs}

\begin{lem}[{\cite[Lemma~2.1]{LPX}}]\label{LPX}
Let $G$ be a finite group, and let $S, T \subseteq G \setminus \{1_G\}$. 
Then $\cay(G,S) \cong \cay(G,T)$ if and only if 
$\cay(\sg{S},S) \cong \cay(\sg{T},T)$. 
\end{lem}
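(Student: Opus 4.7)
The plan is to exploit the coset decomposition of a Cayley digraph: I would first observe that $\cay(G,S)$ splits as a disjoint union of its connected components, and each connected component is isomorphic to $\cay(\sg{S},S)$. Once this structural fact is in hand, both implications of the lemma reduce to routine bookkeeping.

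The key preliminary observation I would prove is that for any $S \subseteq G \setminus \{1_G\}$, the connected components of $\cay(G,S)$ are precisely the right cosets $\sg{S}g$ for $g \in G$, and for each such $g$ the map $x \mapsto xg^{-1}$ restricts to a digraph isomorphism from the induced subdigraph on $\sg{S}g$ onto $\cay(\sg{S},S)$. The reason is that from the arc definition the out-neighbours of a vertex $g$ are $\{sg : s \in S\} \subseteq \sg{S}g$, so each coset $\sg{S}g$ is closed under the arc relation, and after right-translation by $g^{-1}$ one recovers exactly the arcs of $\cay(\sg{S},S)$. Consequently $\cay(G,S)$ consists of $[G:\sg{S}]$ connected components, each isomorphic to $\cay(\sg{S},S)$, and analogously for $T$.

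For the forward direction, I would suppose $\cay(G,S) \cong \cay(G,T)$ via some isomorphism $\varphi$. Any digraph isomorphism sends connected components bijectively onto connected components of the same cardinality, so comparing component sizes forces $|\sg{S}| = |\sg{T}|$. Restricting $\varphi$ to any single component of $\cay(G,S)$ gives an isomorphism onto some component of $\cay(G,T)$; composing with the canonical translation isomorphisms identified in the preceding paragraph yields $\cay(\sg{S},S) \cong \cay(\sg{T},T)$.

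For the backward direction, from $\cay(\sg{S},S) \cong \cay(\sg{T},T)$ I would immediately obtain $|\sg{S}| = |\sg{T}|$, hence $[G:\sg{S}] = [G:\sg{T}]$. Then $\cay(G,S)$ and $\cay(G,T)$ are disjoint unions of the same number of pairwise isomorphic components, so choosing any bijection between the two sets of cosets and gluing the component isomorphisms produces the desired global isomorphism. There is no substantive obstacle here; the only mild point to be careful about is that connected components of a Cayley digraph are indeed determined by right cosets of $\sg{S}$, which is the standard fact proved in the preliminary observation above.
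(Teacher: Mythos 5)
Your proof is correct: the weak connected components of $\cay(G,S)$ are exactly the right cosets of $\sg{S}$, each carrying a copy of $\cay(\sg{S},S)$ via right translation by $g^{-1}$, and both implications then follow by matching components as you describe. Note that the paper states this lemma only as a citation of \cite[Lemma~2.1]{LPX} and supplies no proof of its own, so there is nothing to compare against; your argument is the standard one.
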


\begin{lem}\label{reduction}
For every subset $S   \subseteq \Z_n \setminus \{0\}$,  
$\cay(\Z_n,S)$ is a CI-digraph if and only if $\cay(\sg{S},S)$ is a CI-digraph.
\end{lem}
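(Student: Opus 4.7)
The plan is to use Lemma~\ref{LPX} together with two elementary properties of cyclic groups: (i) $\Z_n$ has a unique subgroup of each order dividing $n$, and in particular every subgroup $H \le \Z_n$ is characteristic in $\Z_n$; (ii) the natural restriction map $\aut(\Z_n) \to \aut(H)$ is surjective for every subgroup $H \le \Z_n$. Property (ii) follows from the standard fact that whenever $d \mid n$, every unit modulo $d$ lifts to a unit modulo $n$ (equivalently, the reduction $(\Z/n\Z)^\ast \to (\Z/d\Z)^\ast$ is onto, a CRT-level observation). Throughout, set $H = \sg{S}$.

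For the implication $(\Leftarrow)$, I would assume $\cay(H,S)$ is a CI-digraph and take $T \subseteq \Z_n \setminus \{0\}$ with $\cay(\Z_n,S) \cong \cay(\Z_n,T)$. Lemma~\ref{LPX} yields $\cay(H,S) \cong \cay(\sg{T},T)$; comparing vertex counts forces $|\sg{T}| = |H|$, so $\sg{T} = H$ by (i). Hence $T \subseteq H$ and $\cay(H,S) \cong \cay(H,T)$, so the CI-hypothesis in $H$ supplies some $\sigma \in \aut(H)$ with $S^\sigma = T$; then (ii) lifts $\sigma$ to a $\tau \in \aut(\Z_n)$, and $S^\tau = T$ as required.

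For the implication $(\Rightarrow)$, I would assume $\cay(\Z_n,S)$ is a CI-digraph and take $T \subseteq H \setminus \{0\}$ with $\cay(H,S) \cong \cay(H,T)$. Applying Lemma~\ref{LPX} inside $H$ gives $\cay(H,S) \cong \cay(\sg{T},T)$, and the same vertex-count argument forces $\sg{T} = H$. A second application of Lemma~\ref{LPX}, now in $\Z_n$, then yields $\cay(\Z_n,S) \cong \cay(\Z_n,T)$; the CI-hypothesis produces $\tau \in \aut(\Z_n)$ with $S^\tau = T$. Since $H$ is characteristic in $\Z_n$ by (i), $\tau$ restricts to an automorphism $\sigma = \tau|_H \in \aut(H)$, and $S^\sigma = T$.

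I do not foresee a real obstacle: the entire argument is bookkeeping once Lemma~\ref{LPX} and the cyclic-group facts (i)--(ii) are invoked. The single delicate point is to remember, in both directions, that before appealing to the CI-hypothesis one must first verify $\sg{T} = H$, which is precisely what the vertex-count comparison afforded by Lemma~\ref{LPX} delivers.
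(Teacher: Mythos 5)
Your proof is correct and follows essentially the same route as the paper: both directions hinge on Lemma~\ref{LPX}, on the fact that a cyclic group has a unique subgroup of each order (forcing $\sg{T}=\sg{S}$), and on restricting/extending automorphisms between $\aut(\Z_n)$ and $\aut(\sg{S})$. The only cosmetic difference is that in the forward direction the paper deduces $\sg{T}=\sg{S}$ from connectedness of $\cay(\sg{S},S)$, whereas you reapply Lemma~\ref{LPX} inside $\sg{S}$ and compare vertex counts; both are valid.
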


\begin{proof}
($\Longrightarrow$) Suppose that $\cay(\Z_n, S)$ is a CI-digraph and 
$\cay(\sg{S},S) \cong \cay(\sg{S},T)$ for some subset $T \subset \sg{S}$. 
Since $\cay(\sg{S},S)$ is connected, it follows that $\cay(\sg{S},T)$ is also connected 
and thus $\sg{T}=\sg{S}$. By Lemma~\ref{LPX}, 
$\cay(\Z_n, S) \cong \cay(\Z_n, T)$. As $\cay(\Z_n, S)$ is a CI-digraph, there exists a Cayley isomorphism $\sigma \in \aut(\Z_n)$ between $\cay(\Z_n, S)$ and 
$\cay(\Z_n,T)$. The automorphism $\sigma$ maps $\sg{S}$ to itself, hence 
its restriction to $\sg{S}$ induces a Cayley isomorphism between 
$\cay(\sg{S},S)$ and $\cay(\sg{S},T)$, and so 
$\cay(\sg{S},S)$ is indeed a CI-digraph.

($\Longleftarrow$) Suppose that $\cay(\sg{S},S)$ is a CI-digraph and 
$\cay(\Z_n,S) \cong \cay(\Z_n,T)$ for some subset $T \subset \Z_n$. 
By Lemma~\ref{LPX}, $\cay(\sg{S},S) \cong \cay(\sg{T},T)$. Since $\Z_n$ is a cyclic group, $\sg{S}=\sg{T}$. 
As $\cay(\sg{S},S)$ is a CI-digraph, there exists a Cayley isomorphism 
$\sigma \in \aut(\sg{S})$ between $\cay(\sg{S},S)$ and $\cay(\sg{S},T)$.
It is not hard to show that $\sigma$ can be extended to an automorphism of 
$\Z_n$, or in other words, there exists an automorphism $\sigma' \in \aut(\Z_n)$ such 
that its restriction to $\sg{S}$ is equal to $\sigma$. We obtain that $\sigma'$ is a Cayley isomorphism between $\cay(\Z_n, S)$ and $\cay(\Z_n, T)$, so 
$\cay(\Z_n, S)$ is a CI-digraph.
\end{proof}

The following lemma follows from \cite[Lemma~25]{D}.

\begin{lem} \label{lexi1}
Let $H \le \Z_n$ be a subgroup and let $s \in \Z_n$ be any element outside $H$. 
Then $\cay(\Z_n, H + s)$ is a CI-digraph and $\cay(\Z_n, (H + s) \cup (H - s))$ is a CI-graph. 
\end{lem}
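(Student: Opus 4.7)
My plan is to exploit the interpretation of $\cay(\Z_n, H+s)$ (and its undirected counterpart) as the lexicographic (wreath) product of a small Cayley (di)graph on $\Z_n/H$ with the edgeless graph $\overline{K_{|H|}}$, and then to transfer any given isomorphism to the quotient, where it can be realised by a unit of $\Z_n/H$ and lifted back to $\Z_n$.

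First, by Lemma~\ref{reduction} and the identities $\sg{H+s} = H + \sg{s} = \sg{(H+s) \cup (H-s)}$, I would replace $\Z_n$ by $\sg{S}$ and so assume $\Z_n = H + \sg{s}$, equivalently that $s + H$ generates the quotient $\Z_n / H$. Write $m = [\Z_n : H]$.

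Suppose now $\cay(\Z_n, S) \cong \cay(\Z_n, T)$ for some $T \subseteq \Z_n \setminus \{0\}$. Since in $\cay(\Z_n, H+s)$ the out-arcs from any vertex $g$ go exactly to the coset $(g+s) + H$, and two vertices of the same $H$-coset have identical out-neighbourhoods, the digraph is isomorphic to the lexicographic product $\vec C_m[\overline{K_{|H|}}]$; similarly $\cay(\Z_n, (H+s) \cup (H-s))$ is $C_m[\overline{K_{|H|}}]$, degenerating to a complete multipartite graph when $m \in \{2, 3, 4\}$. From this structure, combined with vertex-transitivity of $\cay(\Z_n, T)$, I would extract a $\Z_n$-invariant partition of the vertex set into blocks of size $|H|$ (typically as the equivalence classes of the ``same out-neighbourhood'' relation). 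Any such partition is a system of imprimitivity for the regular $\Z_n$-action, whose blocks are the cosets of a subgroup $H' \le \Z_n$; since $\Z_n$ is cyclic, $|H'| = |H|$ forces $H' = H$, leaving only $T = H + s'$ (or $T = (H + s') \cup (H - s')$) for some $s' \notin H$.

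The induced isomorphism of quotient (di)graphs then supplies a unit $u \in \Z_m^*$ with $u(s + H) = s' + H$ (up to sign in the graph case). The natural map $\aut(\Z_n) \twoheadrightarrow \aut(\Z_n/H)$ is surjective (Chinese Remainder Theorem), so $u$ lifts to some $\sigma \in \aut(\Z_n)$; since $H$ is the unique subgroup of $\Z_n$ of its order, $\sigma(H) = H$, and a short check yields $\sigma(S) = T$, establishing the CI property. The main obstacle is the intrinsic identification, in the second step, of the $H$-coset partition as a graph invariant. This is immediate when $m \ge 5$ via the twin relation on the blow-up of a long cycle, but the small cases (most notably $m = 4$, where the graph is $K_{2|H|, 2|H|}$ and the twin classes are strictly coarser than the $H$-cosets) require separate treatment, leaning on the uniqueness of subgroups of each order in $\Z_n$.
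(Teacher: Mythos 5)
The paper does not actually prove this lemma: it is imported wholesale from Dobson's work (the text reads ``The following lemma follows from \cite[Lemma~25]{D}''), where it is obtained by permutation-group-theoretic means. Your argument is therefore a genuinely different, self-contained route, and its skeleton is sound: the out-neighbourhood of $g$ in $\cay(\Z_n,H+s)$ is the single coset $g+s+H$, so after reducing to $\sg{S}=\Z_n$ via Lemma~\ref{reduction} the out-twin classes are exactly the $H$-cosets for \emph{every} index $m=[\Z_n:H]\ge 2$; they form a block system for the regular $\Z_n$-action with blocks of size $|H|$, hence (cyclicity) are the cosets of $H$ itself, forcing $T=H+s'$; and the surjectivity of $\Z_n^*\to\Z_m^*$ lets you lift the quotient multiplier. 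That disposes of the digraph half completely. What such a direct proof buys is independence from \cite{D}; what the citation buys the authors is brevity.

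The one place you stop short is the undirected case for small $m$, which you flag but do not close. It does close, and cheaply, so this is an incompleteness rather than a failure: for $m\ge 5$ the neighbourhood-twin classes of $C_m[\overline{K_{|H|}}]$ are again precisely the $H$-cosets (two fibres $g+H$, $g'+H$ could only be twins if $4s\in H$, i.e.\ $m\le 4$), so your argument runs verbatim, with the extra observation that $T=(H+s')\cup(H+s'')$ and $T=-T$ force $s''+H=-s'+H$ because a cyclic group has at most one involution. For $m=3$ the graph is $K_{h,h,h}$ and the twin classes are still the $H$-cosets, so nothing changes. For $m\in\{2,4\}$ the graph is complete bipartite with parts the two cosets of the unique index-$2$ subgroup; any $\cay(\Z_n,T)$ isomorphic to it is complete bipartite with parts forming a $\Z_n$-block system of index $2$, whence $T$ is the unique nontrivial coset of that subgroup and $T=S$ outright. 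I would also make explicit, in the step where you pass from ``the twin classes of $\cay(\Z_n,T)$ have size $|H|$'' to ``they are the $H$-cosets,'' that the isomorphism transports twin classes to twin classes and hence preserves both their number and the number of them contained in a neighbourhood; this is what forces $T$ to be a union of the same number of $H$-cosets as $S$. With those paragraphs written out, your proof is a correct and fully elementary replacement for the appeal to \cite[Lemma~25]{D}.
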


\begin{lem} \label{lexi2}
Let $n \in \N$ be an even number and let $p$ be an odd prime such that 
$p^2 \mid n$. Suppose that $S = (P + s) \cup (P - s) \cup \{ \frac{n}{2} \}$, 
where $P \le \Z_n$ is the subgroup of order $p$, $P+s \ne P-s$ and 
$\sg{S}=\Z_n$. Then $\cay(\Z_n, S)$ is a CI-graph.
\end{lem}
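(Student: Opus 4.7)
The strategy is to apply Muzychuk's criterion (Theorem~\ref{muzychuk2}) and then promote the resulting permutation to a genuine automorphism of $\Z_n$. Suppose $\cay(\Z_n,S)\cong\cay(\Z_n,T)$ for some inverse-closed $T\subseteq\Z_n\setminus\{0\}$; then $\k(T)=\k(S)$ and there exists $f=f_{\vec m}\in P(\k(S))$ with $f(S)=T$, and we seek $\sigma\in\aut(\Z_n)$ with $\sigma(S)=T$. Decompose $\Z_n=\Z_{p^t}\oplus\Z_{n'}$ with $\gcd(p,n')=1$ (note $n'$ is even, since $n$ is even and $p$ is odd), and write $s=(a,b)$ and $f=(f_p,f_{n'})$ accordingly; let $P_1\le\Z_{p^t}$ denote the order-$p$ subgroup, so $P=P_1\oplus\{0\}$.

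Two structural properties of every $f_{\vec m}\in P(\k(S))$ drive the argument. \emph{First, $f$ fixes $n/2$.} In the $2$-factor $\Z_{2^{t_2}}$ of $\Z_{n'}$, the element $n/2$ is represented by $2^{t_2-1}$, and Definition~\ref{fm-p} gives $f(n/2)=m_{2,1}\cdot 2^{t_2-1}\equiv 2^{t_2-1}\pmod{2^{t_2}}$, since $m_{2,1}$ is odd. \emph{Second, $f(P+x)=P+f(x)$ for every $x\in\Z_n$.} A direct calculation with the $p$-adic digit expansion (handling both whether or not carrying occurs in the $(t-1)$-th digit) yields $f_p(y+kp^{t-1})\equiv f_p(y)+m_{p,1}kp^{t-1}\pmod{p^t}$ for all $y\in\Z_{p^t}$ and $k\in\Z$. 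Since $m_{p,1}$ is coprime to $p$, $f_p$ permutes the $P_1$-cosets of $\Z_{p^t}$; as $P$ is concentrated in the $p$-component, this lifts to the claim about $P$-cosets in $\Z_n$.

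Combining these two properties, $T\setminus\{n/2\}=f((P+s)\cup(P-s))=(P+u)\cup(P+v)$, where $u:=f(s)$ and $v:=f(-s)$. Inverse-closedness of $T$ forces $\{P+u,P+v\}=\{P-u,P-v\}$, yielding two possibilities: either $P+v=P-u$ (so $T=(P+u)\cup(P-u)\cup\{n/2\}$ with $2u\notin P$), or both $P+u$ and $P+v$ are self-inverse $P$-cosets. Since $|P|=p$ is odd and $n/p$ is even, the $2$-torsion of $\Z_n/P$ has order exactly $2$, so the only self-inverse $P$-cosets in $\Z_n$ are $P$ and $P+n/2$. Hence the second case would force $T\supseteq P$, giving $0\in T$, a contradiction. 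Thus $T=(P+u)\cup(P-u)\cup\{n/2\}$ with $2u\notin P$.

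To finish, we construct $\sigma$ as multiplication by a unit $c\in(\Z_n)^*$ satisfying $cs\equiv u\pmod P$; this immediately yields $\sigma(P\pm s)=P\pm u$ and $\sigma(n/2)=n/2$ (any unit modulo an even number is odd), hence $\sigma(S)=T$. Writing $c=(c_p,c_{n'})$ via CRT, the congruence $cs\equiv u\pmod P$ decouples into $c_p a\equiv f_p(a)\pmod{p^{t-1}}$ and $c_{n'}b=f_{n'}(b)$ in $\Z_{n'}$. Every generalized multiplier function preserves the $p$-adic valuation of its argument, hence the multiplicative order of every element; consequently $f_p(a)\bmod p^{t-1}$ has the same order in $\Z_{p^{t-1}}$ as $a\bmod p^{t-1}$, and $f_{n'}(b)$ has the same order in $\Z_{n'}$ as $b$. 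Since the $(\Z_m)^*$-orbit of any element of $\Z_m$ consists precisely of the elements of the same order, the required $c_p\in(\Z_{p^t})^*$ and $c_{n'}\in(\Z_{n'})^*$ both exist. The main obstacle is the case analysis of the preceding paragraph, where the oddness of $p$ and the evenness of $n$ combine decisively to exclude the pathological case.
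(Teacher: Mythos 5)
Your proof is correct, but it follows a genuinely different route from the paper's. The paper first disposes of $n=18$ via Theorem~\ref{muzychuk1}, then proves a combinatorial claim (an edge of $\cay(\Z_n,S)$ lies in a $K_{p,p}$-subgraph if and only if it is not a half edge, which is where $n>18$ and $p^2\mid n$ are used), concludes that the $P$-cosets form an $\aut$-invariant partition of \emph{any} isomorphic copy, pins down $T$ as $(P+t)\cup(P-t)\cup\{\frac n2\}$, and finishes by invoking Lemma~\ref{lexi1} (hence ultimately Dobson's Lemma~25). You instead go straight to Muzychuk's criterion (Theorem~\ref{muzychuk2}) to obtain a generalized multiplier $f$ with $S^{f}=T$, and replace the $K_{p,p}$ argument by two digit-level computations valid for \emph{every} generalized multiplier function: $f$ fixes $n/2$ (the top $2$-adic digit is multiplied by an odd $m_1$) and $f$ permutes the $P$-cosets (the top $p$-adic digit is multiplied by $m_1$ coprime to $p$). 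Your dichotomy for $T$ from inverse-closedness is sound: the self-inverse $P$-cosets are exactly $P$ and $P+\frac n2$ because the elements with $2x\in P$ form the subgroup of order $2p$, and both alternatives are excluded since $0\notin T$ and $\frac n2\notin T\setminus\{\frac n2\}$. The final existence of the unit $c$ also checks out: generalized multiplier functions preserve $p$-adic valuations componentwise, hence element orders, so $a$ and $f_p(a)$ agree up to a unit modulo $p^{t-1}$ and $b$, $f_{n'}(b)$ agree up to a unit of $\Z_{n'}$, and the congruence $cs\equiv u\pmod P$ decouples correctly under CRT since $P$ sits entirely in the $p$-component. What your approach buys: it is uniform in $n$ (no separate treatment of $n=18$), it avoids both the subgraph-recognition argument and the dependence on Lemma~\ref{lexi1}; the cost is a heavier reliance on the explicit form of the solving sets from Section~\ref{sec_preliminaries}.
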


\begin{proof}
For the sake of simplicity we set $\G=\cay(\Z_n,S)$. 
If $n=18$, then $\G$ is a CI-graph by Theorem~\ref{muzychuk1}, 
hence in the rest of the proof we assume that $n > 18$.
Call an edge $\{ x, y \}$ of $\G$ a {\em half edge} if $y - x = \frac{n}{2}$. 
\medskip

\noindent{\bf Claim.}~
{\it An edge of $\G$ is contained in a subgraph  
isomorphic to $K_{p,p}$ if and only if it is not a half edge.}
\medskip

Let $e$ be a half edge of $\G$, i.e., $e=(x,x+\frac{n}{2})$ for some $x \in \Z_n$. 
Assume for the moment that $e$ is contained in a subgraph isomorphic to $K_{p,p}$. 
Then the latter subgraph contains a $4$-cycle in the form 
$(x,x+\frac{n}{2},y_1,y_2)$ with no half edge besides $e$. This means that there are  
elements $z, z_1, z_2 \in P$ satisfying one of the following equalities:
\[
x+z  \pm s=y_2=y_1+z_2 \pm s=x+\frac{n}{2}+z_1 \pm s+z_2 \pm s.
\]
This shows that $s+\frac{n}{2} \in P$ or $3s+\frac{n}{2} \in P$, and hence 
$2s \in P$ or $6s \in P$. If $2s \in P$, then the order of $s$ is a divisor of $2p$, and as 
$\Z_n=\sg{S}$, we get $n=2p$. This contradicts that $p^2 \mid n$. 
If $6s \in P$, then the order of $s$ is a divisor of $6p$. Using also that $p^2 \mid n$, 
we find that $p=3$ and $n=18$. This, however, is excluded. 

Suppose second that $e=\{x,y\}$ is not a half edge, i.e., $y=x+z+\varepsilon s$, where 
$z \in P$ and $\varepsilon \in \{1,-1\}$. It is straightforward to check that 
the subgraph of $\G$ induced by the set $(P+x) \cup (P+y)$ contains 
$e$ and it is isomorphic to $K_{p,p}$. This completes the proof of the claim. 
\medskip
 
The above argument also shows that the biparts of the subgraphs of $\G$ isomorphic 
to $K_{p,p}$ partition the vertex set $\Z_n$ into the $P$-cosets, which therefore, 
form an $\aut(\G)$-invariant partition. 

Now suppose that $\cay(\Z_n,S) \cong \cay(\Z_n,T)$ for some subset 
$T \subset \Z_n$. It follows in turn that, the partition of $\Z_n$ into the $P$-cosets is also $\aut(\cay(\Z_n,T))$-invariant, $T \setminus \{\frac{n}{2}\}=
(P+t) \cup (P-t)$ for a suitable element $t \in \Z_n$, and 
\[
\cay(\Z_n,(P+s) \cup (P-s)) \cong \cay(\Z_n,(P+t) \cup (P-t)).
\] 
By Lemma~\ref{lexi1}, $T \setminus \{ \frac{n}{2} \} = (S \setminus \{ \frac{n}{2} \})^{\sigma}$ for some automorphism $\sigma$ of $\Z_n$, and hence 
$T=S^\sigma$. 
\end{proof}

We are ready to derive our main theorems. 
Given a positive integer $k$, in the proofs below we denote by $\pi(k)$ the 
set of all prime divisors of $k$. For a prime $p$, the {\em $p$-part} of $k$ is 
the largest power of $p$ dividing $k$. 
 
\begin{proof}[Proof of Theorem~\ref{main1}]
If $n_2 = 1$, then Theorem~\ref{main1} follows from Theorem~\ref{muzychuk1}. Thus for the rest of the proof we assume that $n_2 > 1$. Let $S \subset \Z_n$ such that 
$0 \notin S$ and $|S| \le m$. By Lemma~\ref{reduction}, it suffices to show that 
$\cay(\sg{S},S)$ is a CI-(di)graph. We derive this in three steps. 
Write $\sg{S}=\Z_{n'}$.
\medskip

\noindent{\bf Step~1.}~
{\it $\cay(\Z_{n'},S)$ is a CI-(di)graph, unless $n_2$ is odd.}
\medskip

Assume that $n_2$ is even. Then $|S| \le 2$ if $\cay(\Z_{n'}, S)$ is a digraph and $|S| \le 5$ if it is a graph. Thus the claim in Step~1 follows from the known facts that 
every cyclic group is $2$-DCI-group~\cite{S84} and also a 
$5$-CI-group~\cite{L95}.
\medskip

For the rest of the proof we assume that $n_2$ is odd. Let $p$ be the smallest 
prime divisor of $n_2$ and $P$ be the subgroup of $\Z_n$ of order $p$. 
\medskip

\noindent{\bf Step~2.}~  
{\it $\cay(\Z_{n'},S)$ is a CI-(di)graph, unless $p^2 \mid n'$
and one of the following holds: 
\begin{enumerate}
\item[(i)] $S = P + s$, for some $s \in \Z_{n'}$, 
\item[(ii)] $S = (P + s) \cup (P - s)$, for some $s \in \Z_{n'}$ such that 
$P+s \ne P-s$, 
\item[(iii)] $n'$ is even and $S = (P + s) \cup (P - s) \cup \{ \frac{n'}{2} \}$, for some $s \in \Z_{n'}$ such that $P+s \ne P-s$.
\end{enumerate}}
\medskip

Assume that $S$ is none of the sets described in the cases (i)--(iii) above. 
We show below that $\cay(\Z_{n'},S)$ is a CI-(di)graph.

Let $n'$ have prime decomposition $n' = p_{1}^{t_1} \cdots p_{l}^{t_l}$. 
Let $\k = (\k_1, \ldots, \k_l) = \k(S)$ be the key of the set $S$. 
Suppose that $\k_i \ne \z$, the zero key in $\K_{p_i^{t_i}}$. Then $t_i > 1$. 
Assume for the moment that $p_i > 2$. Since $p_i^2 \, \mid \, n'$, it follows that $p_i \in \pi(n_2)$. As $\sg{S}=\Z_{n'}$, there is an element $s \in S$ such that the order of $s$ in the group 
$\Z_{n'}$ has $p_i$-part $p_i^{t_i}$. Let 
$\m=(\m_1,\ldots,\m_l)$ be the key of $\Z_{n'}$ defined as for every $j \in [l]$,
\[
\m_j=\begin{cases} (0,\ldots,0) & \text{if}~j \ne i, \\
(0,\ldots,0,1) & \text{if}~j=i.
\end{cases}
\]
The cell of $\Sigma(\m)$ containing $s$ is equal to $P_i + s$, where $P_i \le \Z_{n'}$ and $|P_i| = p_i$. Also, $\m \leq \k$, hence by Proposition~\ref{prop}, $\Sigma(\k) \sqsubseteq \Sigma(\m)$. Let $C$ be the cell of $\Sigma(\k)$ containing $s$. It follows from the definition of $\k=\k(S)$ that 
\[
P_i + s \subseteq C \subseteq S \implies p_i \leq |S|.
\] 

Suppose first that $\cay(\Z_{n'}, S)$ is a digraph. Since $p_i \in \pi(n_2)$, it follows that 
$|S| \leq p_i$ also holds, and we conclude that $S = P_i + s$. If $p_j \in \pi(n_2)$ and $p_j \ne p_i$, then $p_i=|S| \leq p_j$. Thus $p_i = p$, the smallest prime divisor of $n_2$, and so $S$ is a set described in case (i), which is impossible. 

Suppose now that $\cay(\Z_{n'}, S)$ is a graph. Then $S = -S$, and so 
\[
(P_i + s) \cup (P_i - s) \subseteq C \subseteq S \implies 2p_i \leq |S|.
\] 
For the last implication observe that $P_i + s \ne P_i - s$, for otherwise, $2s \in P_i$,
contradicting that the order of $s$ has $p_i$-part $p_i^{t_i} > p_i$. As $p_i \in \pi(n_2)$, $|S| \leq 2p_i + 1$ also holds, so we conclude that $S = (P_i + s) \cup (P_i - s)$ or $n'$ is even and $S = (P_i + s) \cup (P_i - s) \cup \{ \frac{n'}{2} \}$. If $p_j \in \pi(n_2)$ and $p_j \ne p_i$, then $2p_i \leq |S| \leq 2p_j + 1$. It follows that $p_i=p$ and $S$ is a set in the form given in either case (ii) or case (iii), which is again impossible. We obtain that $p_i = 2$. But as $n_2$ is odd, $2$ must be a divisor of $n_1$, and so $t_i = 2$.

We conclude that $\k = \z$, the zero key of $\Z_{n'}$, or the $2$-part of $n'$ is $4$ 
and $\k=\z_*$, 
the almost zero key in $\K_{n'}$. By Lemma~\ref{sufficient}, $\cay(\Z_{n'}, S)$ is a 
CI-(di)graph.
\medskip

\noindent{\bf Step~3.}~ 
{\it $\cay(\Z_{n'}, S)$ is a CI-(di)graph.}
\medskip

In view of Step~2, we may assume that $p^2 \, | \, n'$. 
If $\cay(\Z_{n'}, S)$ is a digraph, then $S$ is a set described in case (i) of Step~2, and the statement follows from Lemma~\ref{lexi1}. Suppose that $\cay(\Z_{n'}, S)$ is a graph. If $S$ is a set described in case (ii) of Step~2, then the statement follows again from Lemma~\ref{lexi1}; while if it is a set described in case (iii), then the statement 
follows from Lemma~\ref{lexi2}.
\end{proof}

\begin{proof}[Proof of case (1) in Theorem~\ref{main2}]
($\Longrightarrow$) This is equivalent to the following implication: If $8 \mid n$ or $p^2 \mid n$ for some odd prime $p < m$, then $\Z_n$ is not an $m$-DCI-group. 

Suppose first that $8 \mid n$. Then there is a subset $S \subset \Z_n$ such that $\langle S \rangle \cong \Z_8$ and 
\[
\cay(\langle S \rangle, S) \cong \cay(\Z_8, \{ 1, 2, 5 \}),
\]
the non-CI-digraph discussed in~\cite{ET}. By Lemma~\ref{reduction}, $\cay(\Z_n,S)$ is a non-CI-digraph, and so $\Z_n$ is not an $m$-DCI-group (recall that $m \ge 3$). 

Now suppose that $p^2 \mid n$ for some odd prime $p < m$. 
Then there is a subset $S \subset \Z_n$ such that $\sg{S}\cong \Z_{p^2}$ and 
\[
\cay(\sg{S}, S) \cong \cay(\Z_{p^2},(\sg{p}+1) \cup \{p\}),
\]
the non-CI-digraph discussed in \cite{LPX} (see the proof of \cite[Theorem~1.4]{LPX}).
By Lemma~\ref{reduction}, $\cay(\Z_n,S)$ is a non-CI-digraph, and so 
$\Z_n$ is not an $m$-DCI-group. 

($\Longleftarrow$) Define $n_2$ to be $1$ if there is no odd prime $p \in \pi(n)$ such that $p^2$ divides $n$, and let $n_2 = \prod_{p \in \pi(n) \atop 2 < p < n_p} n_p$ otherwise, where $n_p$ denotes the $p$-part of $n$. 
Let $n_1 = \frac{n}{n_2}$. It follows that $\gcd(n_1, n_2)=1$ and $n_1 \mid 4k$, where $k$ is odd and square-free. Note that, for every $p \in \pi(n_2)$, $p^2 \mid n$, 
implying that $p \ge m$. Thus Theorem~\ref{main1} can be applied to $\Z_n$ and this gives that $\Z_n$ is an $m$-DCI-group. 
\end{proof}

\begin{proof}[Proof of case (2) in Theorem~\ref{main2}]
($\Longrightarrow$) This is equivalent to the following implication: If $n \notin \{ 8, 9, 18 \}$ and $8 \mid n$ or $p^2 \mid n$ for some odd prime $p < \frac{m-1}{2}$, then $\Z_n$ is not an m-CI-group. 

If $8 \mid n$, then the graph 
\[
\cay\big(\Z_n,\big\{1,n-1,2,n-2,\frac{n}{2}-1,\frac{n}{2}+1\big\}\big)
\]
is a non-CI-graph as shown in \cite[Example~1.22]{A}. 
Thus $\Z_n$ is not an $m$-CI-group (recall that $m \ge 6$).

Now suppose that $p^2 \mid n$ for some odd prime $p < \frac{m-1}{2}$. 
Let $p=3$ (so $m \ge 8$). The graph 
\[
\cay\big(\Z_n,\big\{1,n-1,3,n-3,\frac{n}{3}+1,\frac{n}{3}-1,
\frac{2n}{3}+1,\frac{2n}{3}-1,\big\}\big)
\]
is a non-CI-graph as shown in \cite[Example~1.23]{A}, and hence  
$\Z_n$ is not an $m$-CI-group.

Let $p \ge 5$ (so $m \ge 2p+2$). In this case, 
there is a subset $S \subset \Z_n$ such that $\sg{S}\cong \Z_{p^2}$ and 
\[
\cay(\sg{S}, S) \cong \cay\big(\Z_{p^2}, (\sg{p}+1) \cup (\sg{p} - 1) \cup \{p,-p\}\big),
\]
the non-CI-graph discussed in \cite{LPX} (see the proof of \cite[Theorem~1.5]{LPX}).
By Lemma~\ref{reduction}, $\cay(\Z_n,S)$ is a non-CI-graph, and so 
$\Z_n$ is not an $m$-CI-group. 

($\Longleftarrow$) If $n \in \{ 8, 9, 18 \}$, then $\Z_n$ is a CI-group, so the statement holds. Assume that $n \notin \{ 8, 9, 18 \}$. Define $n_2$ to be $1$ if there is no odd prime $p \in \pi(n)$ such that $p^2$ divides $n$, and let $n_2 = \prod_{p \in \pi(n) \atop 2 < p < n_p}n_p$ otherwise, where $n_p$ denotes the $p$-part of $n$. 
Let $n_1 = \frac{n}{n_2}$. It follows that $\gcd(n_1,n_2)=1$ and $n_1 \mid 4k$, where $k$ is odd and square-free. Note that, for every $p \in \pi(n_2)$, $p^2 \mid n$, 
implying that $2p+1 \ge m$. Thus Theorem~\ref{main1} can be applied to $\Z_n$ and this gives that $\Z_n$ is an $m$-CI-group.
\end{proof}

\section*{Acknowledgement}

The authors are grateful to the anonymous reviewers for their valuable comments and suggestions which helped to improve the presentation.

\end{document}